\definecolor{aleacolor}{rgb}{0.16,0.59,0.78}
\newcommand*\colvec[1]{
	\global\colveccount#1
	\begin{pmatrix}
		\colvecnext
	}
	\def\colvecnext#1{
		#1
		\global\advance\colveccount-1
		\ifnum\colveccount>0
		\\
		\expandafter\colvecnext
		\else
	\end{pmatrix}
	\fi
}
\newcommand{\ndN}{\mathbb{N}}
\newcommand{\ndR}{\mathbb{R}}
\renewcommand{\Pr}[1]{\mathbb{P}(#1)}
\newcommand{\Prb}[1]{\mathbb{P}\left(#1\right)}
\newcommand{\Ex}[1]{\mathbb{E}[#1]}
\newcommand{\Exb}[1]{\mathbb{E}\left[#1\right]}
\newcommand{\Va}[1]{\mathbb{V}[#1]}
\newcommand{\one}{{\mathbbm{1}}}
\newcommand{\convdis}{\,{\buildrel \mathrm{d} \over \longrightarrow}\,}
\newcommand{\convp}{\,{\buildrel \mathrm{p} \over \longrightarrow}\,}
\newcommand{\eqdist}{\,{\buildrel \mathrm{d} \over =}\,}
\newcommand{\He}{\mathrm{H}}
\newcommand{\Di}{\mathrm{D}}
\newcommand{\cE}{\mathcal{E}}
\newcommand{\cF}{\mathcal{F}}
\newcommand{\cS}{\mathcal{S}}
\newcommand{\cT}{\mathcal{T}}
\newcommand{\mA}{\mathsf{A}}
\newcommand{\mF}{\mathsf{F}}
\newcommand{\mK}{\mathsf{K}}
\newcommand{\mL}{\mathsf{L}}
\newcommand{\mT}{\mathsf{T}}
\newtheorem{theorem}{Theorem}[section]
\newtheorem{corollary}[theorem]{Corollary}
\newtheorem{proposition}[theorem]{Proposition}
\newtheorem{lemma}[theorem]{Lemma}
\numberwithin{equation}{section}
\keywords{Supertrees, Kemp's multidimensional trees}
\title{\textbf{Poisson--Dirichlet scaling limits of Kemp's  supertrees}}
\date{}
\author{Benedikt Stufler}
\address[Benedikt Stufler]{Vienna University of Technology}
\email{benedikt.stufler at tuwien.ac.at}
\begin{document}

\vspace {-0.5cm}

\begin{abstract}
	We determine the Gromov--Hausdorff--Prokhorov scaling limits and local limits of Kemp's $d$-dimensional binary trees and other models of supertrees. The limits exhibit a root vertex with infinite degree and are constructed by rescaling infinitely many independent stable trees or other spaces according to a function of a two-parameter Poisson--Dirichlet process and gluing them together at their roots.  We discuss universality aspects of random spaces constructed in this fashion and sketch a phase diagram.
\end{abstract}


\maketitle

\section{Introduction and main result}

Trees are fundamental  objects in computer science. A great number of algorithms rely on some variants of these structures~\cite{MR2251473,MR2483235}. Studying the typical shape of trees hence can yield insights on the typical performance of such algorithms~\cite{MR2483235}.

Apart from the average case analysis of algorithms, the construction of continuous limit objects of growing sequences of random trees has also lead to flowering research field at the intersection of combinatorics and probability theory~\cite{MR1085326,MR2484382}.

Numerous variants and classes of trees exist. In the present work, we are concerned with so-called supertrees. These structures are ``trees of trees'', meaning we start with a tree from some class, and on each vertex of that tree we graft another tree from a second class. The vertices of the first tree may be referred to as the first level of the supertree, and the remaining vertices form the second level. For this reason, we say such a supertree is $2$-dimensional.

Of course, this construction may be iterated, if the trees grafted upon the vertices of the first level are also supertrees. Hence, given $d \ge 2$ classes of  trees, we may form the corresponding $d$-dimensional (super-)trees.

Motivated by applications to computer science, Kemp~\cite{zbMATH00168426} constructed  and studied such a  class $\cF_d$ of $d$-dimensional binary trees, defined as follows. We start with the class $\cF_1$ of rooted trees where each vertex has either no child, or a left child, or a right child, or both a left and a right child. For $d \ge 2$, the class $\cF_d$ consists of such a tree, where for each vertex $v$ we add an edge between $v$ and the root of some tree from~$\cF_{d-1}$.  

Kemp's $d$-dimensional binary supertrees are  linked via combinatorial bijections to monotonically labelled trees~\cite{MR1260502}, which have received further attention in~\cite{MR2193117}. Core sizes in supertrees were studied recently in the general context of composition schemes~\cite{banderier2021analyticv2}.

The generating series of $\cF_d$ is defined by $F_d(x) = \sum_{n \ge 1} f_{d,n} x^n$, with $f_{n,d}$ denoting the number of $n$-vertex trees in the class $\cF_d$. It is given by the recursive list of equations
\begin{align*}
	F_1(x) &= x(1 + 2F_1(x) + F_1(x)^2), \\
	F_d(x) &= F_1(x F_{d-1}(x)), \qquad d \ge 2.
\end{align*}
Kemp~\cite[Thm. 6]{zbMATH00168426} elegantly proved the asymptotic enumerative formula
\begin{align}
	\label{eq:fdasymp}
	f_{d,n} \sim \frac{2^{2(1- 2^{-d})}}{|\Gamma(-2^{-d})|} 2^{2n} n^{-1 -2^{-d}}
\end{align}
as $d$ is fixed and $n$ tends to infinity.
This contrasts typical combinatorial classes of rooted trees which exhibit the polynomial factor $n^{-3/2}$ instead. Large trees from such classes have been shown to admit Aldous' Brownian tree~\cite{MR1085326,MR1166406,MR1207226}  as limit \cite{MR3050512,MR3773800,MR3983790,MR3573447,MR3914354}, after rescaling distances by $n^{-1/2}$ times some scaling constant that depends on the class under consideration. This phenomenon is referred to as universality of the Brownian tree.

The polynomial factor $n^{-1-2^{-d}}$ suggests that for $d \ge 2$ the asymptotic shape of a uniformly drawn $n$-vertex tree $\mF_{d,n}$ from the subset $\cF_{d,n}$ of $n$-vertex trees in $\cF_d$ may be quite different. This  motivates the question if they have a scaling limit and if it differs from the Brownian tree.

We answer this question, showing that $\mF_{d,n}$ has an inhomogeneous scaling limit $\cS_d$ whose root vertex has countably infinite degree. 
The Brownian tree, on the other hand,  has the property, that almost surely all of its vertices either have degree three or one~\cite{MR1085326}, hence we obtain a different limit.  Our  results also go beyond the specific class of $d$-dimensional binary supertrees, as we construct the limit $\cS_d$ as a special case of a multiparameter family of random spaces that appear to be universal limiting objects for random supertrees and other super-structures.

In order to state this result,  let $0<\alpha<1$ and $\theta > - \alpha$ be given parameters and let $V_1>V_2>\ldots$ denote the ordered points of the two-parameter Poisson--Dirichlet distribution $\mathrm{PD}(\alpha, \theta)$ introduced by Pitman and Yor~\cite{MR1434129}. Let $L$ denote the law of a random rooted compact metric space equipped with a Borel probability measure. Let $X_1, X_2, \ldots$ denote independent samples of $L$ (which are also independent from $(V_i)_{i \ge 1}$). Given a deterministic parameter $s>0$, we let $\cS(\alpha, \theta, s, L)$ denote the random rooted measured real tree obtained by gluing the rescaled spaces $V_1^s X_1, V_2^s X_2, \ldots$ together at their roots. Here, \emph{rescaling} means multiplying distances and measures by the corresponding factor, and \emph{gluing} means identifying the root-points. A formal description of this construction is detailed in Section~\ref{sec:glue}.

Let $d_{\mF_{d,n}}$ denote the graph distance on $\mF_{d,n}$, and let $\mu_{\mF_{d,n}}$ denote the uniform measure on the vertex set of $\mF_{d,n}$. We view $\mF_{d,n}$ as marked at its root vertex. Let $L_{\mathrm{Brownian}}$ denote the law of Aldous' Brownian tree, constructed from Brownian excursion of duration one. This way, by the pioneering invariance principle for large simply generated trees~\cite{MR1207226}, the Brownian tree is the scaling limit of $\mF_{1,n}$ after rescaling distances by $2^{-3/2} n^{-1/2}$. Our main result describes the limit for $d \ge 2$:

\begin{theorem}
	\label{te:main}
	For each integer $d \ge 2$, 
	\[
		(\mF_{d,n}, 2^{-3/2} n^{-1/2} d_{\mF_{d,n}}, \mu_{\mF_{d,n}}) \convdis  \cS(1/2, -1/2^d, 1/2, L_{\mathrm{Brownian}})
	\]
	as $n \to \infty$ with respect to the rooted Gromov--Hausdorff--Prokhorov metric.
	
\end{theorem}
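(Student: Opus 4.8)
The plan is to reduce the convergence of $\mF_{d,n}$ to a statement about a composition scheme and then apply a general scaling-limit result for the family $\cS(\alpha,\theta,s,L)$. Recall that an element of $\cF_d$ is obtained from an element $T$ of $\cF_1$ by grafting, onto each vertex of $T$, an independent element of $\cF_{d-1}$; equivalently, the generating function satisfies $F_d(x) = F_1(xF_{d-1}(x))$. The singularity of $F_1$ is of square-root type at $\rho_1 = 1/4$ with $F_1(\rho_1) = 1$, so $\mF_{1,n}$ rescaled by $2^{-3/2}n^{-1/2}$ converges to the Brownian tree with the stated constant (this is the $d=1$ input, due to~\cite{MR1207226}). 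For $d \ge 2$ the outer series $F_1$ is subcritically composed: one checks from~\eqref{eq:fdasymp} and the recursion that $\rho_d F_{d-1}(\rho_d) = \rho_1 = 1/4$ with $\rho_d F_{d-1}'(\rho_d)\rho_d / (\text{something}) < 1$, i.e. the inner function does not reach its own singularity, which is exactly the regime producing the polynomial factor $n^{-1-2^{-d}}$ rather than $n^{-3/2}$.

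The structural heart of the argument is the following decomposition of a uniform $\mF_{d,n}$. First, a uniform $n$-vertex tree in $\cF_d$ may be sampled as: a core tree $C$ from $\cF_1$ whose size follows the appropriate (power-law, exponent $2^{-d}$) distribution because of subcriticality; to each of the $|C|$ vertices of $C$ one attaches an independent copy of $\mF_{d-1,m_i}$, where $(m_1,\ldots,m_{|C|})$ is the random composition of $n - |C|$ into $|C|$ parts with weights $f_{d-1,m}$. Since the attached $\cF_{d-1}$-trees are themselves of size $\Theta(n)$ only for $O(1)$ of them while the rest are $o(n)$, in the scaling limit the core $C$ collapses to a single point (its diameter is $O_p(1)$, negligible after dividing distances by $n^{-1/2}$), and what survives is the collection of the few macroscopic attached subtrees, reattached at a common root. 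The relative sizes $(m_i/n)$ of these macroscopic pieces, listed in decreasing order, converge to a $\mathrm{PD}(1/2, \theta_{d})$ sequence $(V_i)$ — this is the classical Poisson--Dirichlet behaviour of sizes in a subcritical composition scheme with a square-root-type outer singularity, and an induction on $d$ identifies the parameter as $\theta_d = -1/2^d$ (the base relation being $\theta_1 = -1/2$ for the square-root singularity, with $\theta$ transforming as $\theta \mapsto \theta \cdot 2^{-(d-1)}$ type relation tracked via the exponents $2^{-d}$ in~\eqref{eq:fdasymp}). Conditionally on the sizes, each macroscopic piece of relative size $V_i$ is a uniform $\cF_{d-1}$-tree on $\approx V_i n$ vertices, whose own scaling limit is, by the induction hypothesis, $\cS(1/2, -1/2^{d-1}, 1/2, L_{\mathrm{Brownian}})$ with distances additionally multiplied by $V_i^{1/2}$ (since rescaling a size-$k$ tree by $k^{-1/2}$ versus $n^{-1/2}$ introduces the factor $(k/n)^{1/2} = V_i^{1/2}$); one then needs the self-similarity identity $\cS(1/2, -1/2^{d-1}, 1/2, L_{\mathrm{Brownian}})$ with these further $V_i^{1/2}$-rescalings and a fresh PD-gluing collapses to $\cS(1/2, -1/2^d, 1/2, L_{\mathrm{Brownian}})$, which is precisely the composition-stability of the construction in Section~\ref{sec:glue}.

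Concretely the steps are: (i) establish the sampling recipe above and the power-law tail of the core size from~\eqref{eq:fdasymp} via singularity analysis; (ii) prove the joint convergence of the decreasing rescaled part sizes to $\mathrm{PD}(1/2,-1/2^d)$ — the standard route is via the convergence of the associated additive process / exchangeable random composition, using the local limit theorem for $f_{d-1,m}$ implied by~\eqref{eq:fdasymp}; (iii) prove tightness and the collapse of the core in the GHP topology, i.e.\ show $2^{-3/2}n^{-1/2}\mathrm{diam}(C) \to 0$ and that the small attached subtrees contribute negligible mass and diameter uniformly; (iv) invoke the induction hypothesis conditionally on the part sizes to identify each macroscopic block's limit, then assemble via the gluing functional, whose continuity/measurability is the content of Section~\ref{sec:glue}; (v) read off that the assembled object is $\cS(1/2, -1/2^d, 1/2, L_{\mathrm{Brownian}})$ using the self-similarity of $\cS$. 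The base case $d=2$ uses $d=1$, i.e. the Brownian tree, directly.

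**The main obstacle** I anticipate is step (iii)–(iv): controlling the GHP distance between the prelimit and the glued object requires simultaneously handling (a) the accumulation of infinitely many vanishingly small blocks — one must show their total measure is small and that no hidden long path forms through them, which needs a uniform upper tail bound on the diameter of a uniform $\cF_{d-1}$-tree of size $m$, of order $\sqrt m$ with stretched-exponential fluctuations, obtained by transfer from the $d=1$ Brownian-tree tail and induction — and (b) the fact that the number of macroscopic blocks is random and unbounded, so the continuity of the gluing map must be phrased in a way robust to truncation (glue the top $K$ blocks, send $K\to\infty$ after $n\to\infty$, and bound the error uniformly in $n$). A secondary, more bookkeeping-type difficulty is pinning down the exact parameter $\theta_d = -1/2^d$ and the scaling constant $2^{-3/2}$ through the recursion $F_d(x) = F_1(xF_{d-1}(x))$: one must verify that the $d=1$ constant $2^{-3/2}$ is not further modified, which follows because in the subcritical regime the outer composition contributes only the PD-gluing and the size-rescaling factors $V_i^{1/2}$, not an extra multiplicative distortion of distances inside each block.
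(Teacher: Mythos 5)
Your overall architecture (decompose $\mF_{d,n}$ at the first level, show the first level collapses, extract the macroscopic $\cF_{d-1}$-components, glue their rescaled limits, and induct on $d$) matches the paper's proof, but two of your key claims are wrong as stated and one genuinely needed ingredient is missing. First, the composition $F_d(x) = F_1(xF_{d-1}(x))$ is \emph{critical}, not subcritical: since $F_{d-1}(1/4)=1$, the inner series $W(x)=xF_{d-1}(x)$ satisfies $W(\rho_w)=1/4=\rho_{F_1}$, so the inner and outer series are singular simultaneously. A genuinely subcritical composition ($W(\rho_w)<\rho_v$) would put you in the convergent regime of Section~\ref{sec:phase} --- a single giant level-$2$ component, a stochastically bounded core, and the Brownian tree as limit, with no Poisson--Dirichlet structure at all. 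In the actual (dilute) regime of Proposition~\ref{pro:dilute1} the core has $\Theta(n^{2^{-(d-1)}})$ vertices in probability and diameter of order $n^{2^{-d}}$, not $O_p(1)$ as you claim; it still contracts after rescaling by $n^{-1/2}$ for $d\ge 2$, but for a different reason than the one you give.

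Second, the ranked relative sizes of the $\cF_{d-1}$-components attached to the first level converge to $\mathrm{PD}(2^{-(d-1)},-2^{-d})$, not to $\mathrm{PD}(1/2,-2^{-d})$ as you assert: the relevant power-law exponent is that of $f_{d-1,n}$, namely $\alpha=2^{-(d-1)}$, and the limiting point process is $\Upsilon_{\alpha,1/2}\eqdist\mathrm{PD}(\alpha,-\alpha/2)$ by~\eqref{eq:pd}. For $d\ge3$ these laws differ, and with your parameters the two-level gluing (outer PD sizes times the inner PD sizes coming from the induction hypothesis) does not recombine into $\cS(1/2,-1/2^d,1/2,L_{\mathrm{Brownian}})$. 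What makes the recombination work is the concatenation identity of Lemma~\ref{le:combine}: multiplying the points of $\Upsilon_{\alpha_1\alpha_2,\alpha_3}$ by independent copies of the points of $\Upsilon_{\alpha_1,\alpha_2}$ yields $\Upsilon_{\alpha_1,\alpha_2\alpha_3}$. This is a nontrivial distributional statement about two-parameter Poisson--Dirichlet point processes, which the paper proves by computing the limit of a triple composition scheme in two ways. You invoke it as ``composition-stability of the construction in Section~\ref{sec:glue}'', but that section only defines the gluing map and contains no such identity; without Lemma~\ref{le:combine} (or an equivalent) the final identification of the limit is unjustified. Your anticipated difficulties in steps (iii)--(iv) concerning GHP tightness are real but secondary to these two points.
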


The proof recovers Equation~\eqref{eq:fdasymp} by a probabilistic argument. We complete this proof in Section~\ref{sec:mainproof}, where we also verify the following tail-bounds for the height $\He(\mF_{d,n})$.
\begin{theorem}
	\label{te:bound}
	For each integer $d \ge 1$, there exist constants $C,c>0$ such that for all $n$ and all $x>0$
	\[
		\Pr{\He(\mF_{d,n}) \ge x} \le C \exp(-cx^2/n).
	\]
\end{theorem}
For $d=1$ such bounds have already been established in~\cite{MR3077536} (see  also~\cite{addarioberry2022random}), and our proof builds upon the results there. Theorem~\ref{te:bound} ensures arbitrarily high uniform integrability of the rescaled height $\He(\mF_{d,n}) / \sqrt{n}$. Together with Theorem~\ref{te:main} this implies that for any $p \ge 1$ we have that 
\[
	\Ex{\He(\mF_{d,n})^p} 2^{-3p/2} n^{-p/2} \to \Ex{\He( \cS(1/2, -1/2^d, 1/2, L_{\mathrm{Brownian}}))^p} < \infty,
\]
and likewise for similar parameters such as the diameter, or the distance between two random points in $\mF_{d,n}$.

In Section~\ref{sec:phase} we  additionally show that spaces of the form $\cS(\alpha, \theta, s, L)$ have a universality property, meaning that they appear as scaling limits of many other classes of supertrees~\cite{MR1370950} and super-structures. However, of course, not all supertrees admit such limits, other phases exist as well. We sketch a phase diagram and present several examples. In particular, Section~\ref{sec:phase} presents several additional results, which we do not state here in the introduction, as each requires a substantial amount of notation and background to be recalled. Our main tool for establishing these results are limits for Gibbs partitions~\cite{gibbsmcta}.

Let us mention that Aldous and Pitman~\cite{MR1808372} introduced inhomogeneous continuum random trees which can have a finite or infinite number of points called \emph{hubs} with infinite degree. It is not clear to the author of the present work whether there is a connection between these trees and those constructed from the two-parameter Poisson--Dirichlet process here.

Apart from studying the global shape of Kemp's supertrees we additionally obtain local convergence:
\begin{theorem}
	\label{te:main2}
	There is a random infinite tree $\hat{\mF}_d$ that depends on $d$ such that
	\[
		\mF_{d,n} \convdis \hat{\mF}_d
	\]
	in the local topology. Furthermore, if $v_n$ denotes a uniformly at random selected vertex of $\mF_{d,n}$, then there is a random tree $\bar{\mF}$ that does not depend on $d$ such that
	\[
		(\mF_{d,n},v_n) \convdis \bar{\mF}.
	\]
	The convergence also holds in the quenched sense, so that
	\[
		\mathfrak{L}((\mF_{d,n},v_n) \mid \mF_{d,n}) \convp \mathfrak{L}(\bar{\mF}).
	\]
\end{theorem}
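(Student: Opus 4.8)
The plan is to exploit the recursive ``supertree'' structure. Every tree in $\cF_d$ is built from a level-one skeleton $\mT_1\in\cF_1$ — a simply generated tree with weight sequence $(1,2,1)$, i.e.\ a critical finite-variance Galton--Watson tree with offspring law $\mu=(\tfrac14,\tfrac12,\tfrac14)$ — by attaching to each vertex of $\mT_1$, through a new edge, an independent tree from $\cF_{d-1}$. Thus $\mF_{d,n}$ is a Gibbs partition with core $\mT_1$ and blocks the attached $\cF_{d-1}$-trees, the block weights being the coefficients of $F_{d-1}$. From the recursion and~\eqref{eq:fdasymp} one has $F_{d-1}(1/4)=1$ and $1-F_{d-1}(x)\sim c_{d-1}(1-4x)^{2^{-(d-1)}}$, so an attached $\cF_{d-1}$-tree under the critical Boltzmann measure has size tail of order $m^{-1-2^{-(d-1)}}$; since $2^{-(d-1)}<1$ this is the convergent, infinite-mean regime. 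I would first extract from the estimates underlying Theorem~\ref{te:main} — or directly from the general theory of Gibbs partitions~\cite{gibbsmcta} — the facts I need: (i) the core size $|\mT_1|$ tends to infinity in probability (it is of order $n^{2^{-(d-1)}}$), with $\mT_1$ conditionally a uniform $\cF_1$-tree of that size and independent of the exchangeable family of blocks; and (ii) the block sizes, ranked and divided by $n$, have a non-degenerate weak limit (the two-parameter Poisson--Dirichlet law of Theorem~\ref{te:main}), so that a uniformly random vertex lies, with probability tending to one as $K\to\infty$ uniformly in $n$, in one of the $K$ largest blocks. Iterating the decomposition $d-1$ times, the innermost blocks are plain $\cF_1$-trees and all of levels $1,\dots,d-1$ together carry only $o(n)$ vertices; note also that every vertex of $\mF_{d,n}$ has degree at most four, so the limits below are automatically locally finite.

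For convergence around the root, observe that the root of $\mF_{d,n}$ is the root of $\mT_1$. Since $|\mT_1|\to\infty$, the local limit theorem for simply generated trees gives that $\mT_1$ converges in the local topology to Kesten's (sin-)tree $\mK$ of the weighted class $\cF_1$. The exceptionally large blocks are, conditionally on $\mT_1$, situated at uniformly random vertices of $\mT_1$, hence at distance of order $\sqrt{|\mT_1|}\to\infty$ from the root; therefore, by the principle that the non-exceptional blocks of a Gibbs partition are asymptotically i.i.d.\ Boltzmann and independent of the core, the blocks attached to any fixed finite neighbourhood of the root converge jointly, and independently of $\mK$, to independent critical Boltzmann $\cF_{d-1}$-trees. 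Attaching one such independent block, as a child edge, to every vertex of $\mK$ then identifies $\hat{\mF}_d$ as a one-ended infinite tree genuinely depending on $d$. The hard part here is to run ``the core is locally Kesten'' and ``the nearby blocks are i.i.d.\ Boltzmann'' simultaneously while conditioning on the \emph{total} size rather than on $|\mT_1|$; in the infinite-mean regime $2^{-(d-1)}<1$ this needs the stable local limit theorems, uniform in the number of blocks, that are packaged in~\cite{gibbsmcta}, and it is essentially the same obstacle already overcome in the proof of Theorem~\ref{te:main}.

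For convergence around a uniform vertex $v_n$, by (i)--(ii) and the iterated decomposition $v_n$ lies, with probability tending to one, in a macroscopic level-$d$ $\cF_1$-block $\tau$ of size comparable to $n$, as a uniformly random vertex of $\tau$; and the only vertex joining $\tau$ to the rest of $\mF_{d,n}$ (its root) sits at distance of order $\sqrt{|\tau|}\to\infty$ from $v_n$, so for each fixed $r$ the radius-$r$ ball around $v_n$ in $\mF_{d,n}$ agrees, with probability tending to one, with the radius-$r$ ball of $\tau$ around $v_n$. Conditionally on its size $\tau$ is a uniform $\cF_1$-tree, so $(\tau,v_n)$ converges to $\bar{\mF}$, the local limit of a large uniform $\cF_1$-tree (equivalently, a large critical $\mu$-Galton--Watson tree) around a uniformly chosen vertex: a one-ended random tree akin to Kesten's tree of $\cF_1$, consisting of a single infinite spine carrying independent critical $\mu$-Galton--Watson bushes, with size-biased branching along the spine above the distinguished vertex and ordinary $\mu$-branching at and below it. Since the innermost block is always a $\cF_1$-tree, whatever $d$ is, $\bar{\mF}$ does not depend on $d$, and $(\mF_{d,n},v_n)\convdis\bar{\mF}$ follows.

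Finally, for the quenched statement I would use the standard second-moment reduction: it suffices to show that two vertices $v_n,v_n'$ drawn independently and uniformly from the \emph{same} $\mF_{d,n}$ satisfy $\big((\mF_{d,n},v_n),(\mF_{d,n},v_n')\big)\convdis(\bar{\mF},\bar{\mF}')$ with $\bar{\mF}$ and $\bar{\mF}'$ independent, for this makes the variance of $\tfrac1n\sum_{v}f(\mF_{d,n},v)$ tend to zero for every local test function $f$, which is exactly $\mathfrak{L}((\mF_{d,n},v_n)\mid\mF_{d,n})\convp\mathfrak{L}(\bar{\mF})$. With probability tending to one both $v_n$ and $v_n'$ land in macroscopic level-$d$ $\cF_1$-blocks; if in two distinct blocks their radius-$r$ balls lie in different blocks and are conditionally independent, while if in the same block $\tau$ of size comparable to $n$ they are, as two independent uniform vertices of a large $\cF_1$-tree, at distance of order $\sqrt{|\tau|}\to\infty$, so their radius-$r$ balls are disjoint and, by the two-point local independence for large simply generated trees, asymptotically independent. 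In either case the pair converges to two independent copies of $\bar{\mF}$, which completes the proof.
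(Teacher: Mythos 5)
Your proposal is correct in substance and lands on the same limit objects, but for the first assertion (convergence around the root) it takes a genuinely different route from the paper. The paper never looks at the level-one skeleton globally: it decomposes only at the root via $F_{d}(x) = x F_{d-1}(x)(1 + 2 F_d(x) + F_d(x)^2)$, applies the convergent-Gibbs-partition lemma (Lemma~\ref{le:root}) to peel off one spine vertex — an independent Boltzmann copy of $\mF^{d-1}$ plus, in two of four equally likely cases, an independent Boltzmann copy of $\mF^{d}$ — and iterates along the unique macroscopic $\cF_d$-branch; this only ever uses the one-term asymptotics~\eqref{eq:fdasymp} and avoids any statement about the core conditioned on the total size $n$. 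You instead prove that the whole first level converges locally to Kesten's tree of the $(\tfrac14,\tfrac12,\tfrac14)$-law while the blocks near its root become i.i.d.\ critical Boltzmann $\cF_{d-1}$-trees; your limit agrees with the paper's because $F_d(x)=F_1(xF_{d-1}(x))$ with $F_{d-1}(1/4)=1$ makes a Boltzmann $\cF_d$-tree exactly a Boltzmann $\cF_1$-tree with independent Boltzmann $\cF_{d-1}$-trees attached to every vertex, so "Kesten tree of $\cF_1$ decorated by Boltzmann $\cF_{d-1}$'s" unpacks to the paper's spine with its four cases. What your route buys is a structurally transparent description of $\hat{\mF}_d$; what it costs is precisely the step you flag yourself: you need the local limit of the core and the asymptotic i.i.d.\ Boltzmann behaviour of the nearby blocks \emph{jointly, conditioned on total size $n$} in the infinite-mean regime, which requires the local limit theorem~\eqref{eq:dilutellt} for $N_n$ together with the exchangeability of block placements (so that the $O_p(1)$ macroscopic blocks avoid any fixed root neighbourhood); this is doable with the cited machinery but is real work that the paper's iterated root decomposition sidesteps entirely. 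For the second and third assertions your argument is essentially identical to the paper's: a uniform vertex lands w.h.p.\ in a macroscopic level-$d$ $\cF_1$-component by the Poisson--Dirichlet limit~\eqref{eq:witness}, its fixed-radius ball is eventually contained in that component since its distance to the component's root diverges, and the quenched statement follows from the two-point argument splitting according to whether the two uniform vertices hit the same or different components.
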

For $d=1$ these statements are already known~\cite{MR1102319,MR2908619}. The proof of Theorem~\ref{te:main2} is presented in Section~\ref{sec:prooflocal}.

\subsection*{Notation}

All unspecified limits are taken as $n \to \infty$. We let $\convdis$ denote convergence in distribution.  We call a function $L: \ndR_{>0} \to \ndR_{>0}$ called \emph{slowly varying}, if for all $t>0$
\[
\lim_{x \to \infty} \frac{L(tx)}{L(x)} = 0.
\]
For each $a\in \ndR$ the product of $L(x)x^a$   of a slowly varying function $L$ with a power of~$x$ is called \emph{regularly varying} with index $a$.

Given parameters $0<\alpha \le 2$, $-1 \le \beta \le 1$, $\gamma>0$, and $-\infty < \delta < \infty$ we let $S_\alpha(\gamma, \beta, \delta)$ denote the $\alpha$-stable  distribution with scale parameter  $\gamma$, skewness parameter $\beta$, and location parameter $\delta$, so that the characteristic function of a $S_\alpha(\gamma, \beta, \delta)$-distributed random variable $X_\alpha(\gamma, \beta, \delta)$ is given by
\[
\Exb{e^{- i t X_\alpha(\gamma, \beta, \delta)}} = \begin{cases}
	\exp\left( - \gamma^\alpha |t|^\alpha \left(1 - i \beta  \mathrm{sgn}(t) \tan\left(\frac{\pi \alpha}{2}\right)\right) + i \delta t \right), &\alpha \ne 1 \\
	\exp\left(- \gamma |t| \left( 1 + i \beta \mathrm{sgn}(t) \frac{2}{\pi}  \log( |t|)\right) + i \delta t\right), &\alpha = 1.
\end{cases}
\]
We refer the reader to~\cite{2011arXiv1112.0220J} for a survey on their properties.






\section{Glueing rescaled pointed spaces at their roots}

\label{sec:glue}

In this section we formalize the construction of the limiting space that was informally described in the introduction.

We let $\mathbb{K}$ denote the class of compact pointed metric spaces equipped with Borel probability measures. Here ``pointed'' means that each metric spaces has a distinguished point, called its root point. The class $\mathbb{K}$ is equipped with the pointed Gromov--Hausdorff--Prokhorov distance. A detailed expositions of this concept can be found in \cite{zbMATH06610054}, \cite[Ch. 7]{MR1835418},~\cite[Ch. 27]{zbMATH05306371},~\cite{MR3035742}, \cite[Sec. 6]{MR2571957}, and~\cite{janson2020gromovprohorov}.

Let $(a_i)_{i \ge1}$ denote a sequence of non-negative real numbers and let $(b_i)_{i \ge 1}$ be an element of the set
\[
\Delta = \left\{ (b_i)_{i \ge 1} \in \ndR_{>0}^\infty \,\,\Big\vert\,\, \sum_{i=1}^\infty b_i =1 \right\}.
\]
Let $((X_i, x_i), d_i, \mu_i)_{i \ge 1}$ denote a sequence in $\mathbb{K}$. That is, for each integer $i \ge 1$ we are given a compact metric space $(X_i, d_i)$ equipped with a Borel probability measure $\mu_i$ on $X_i$ and a distinguished point $x_i \in X_i$. 

We can form a measured pointed compact metric space $((X,x), d, \mu)$ by glueing together the rescaled measured spaces $(X_i,  a_i d_i, b_i \mu_i )$ at the points $(x_i)_{i \ge 1}$, and declaring the resulting gluing point $x$ as marked. Let us call this space the \emph{gluing of $((X_i, x_i), d_i, \mu_i)_{i \ge 1}$ rescaled by $(a_i)_{i \ge 1}$ and $(b_i)_{i \ge 1}$}. 
	
In order to make this construction formal, we may assume without loss of generality that the sets $X_i$ are pairwise disjoint and let $X = \bigcup_{i \ge 1} X_i / \sim$ for the smallest equivalence relation $\sim$ satisfying $x_i \sim x_j$ for all $i,j \ge 1$. For each $i \ge 1$ let  $\mathrm{can}_i:  X_i \to X, u \mapsto \overline{u}$ denote the canonical embedding.
For $u \in X_i$ and $v \in X_j$ we set $d(\bar{u},\bar{v}) = a_i d_i(u,v)$ if $i=j$, and $d(\bar{u}, \bar{v}) = a_i d_i(u,x_i) + a_j d_j(x_j, v)$ if $i \ne j$.  We view this space as marked at the point $x = \overline{x}_1$. For each Borel subset $B$ of $X$ we define
$\mu(B) = \sum_{i \ge 1} b_i \mu_i( \mathrm{can}_i^{-1}(B))$. Since $\sum_{i \ge1}b_i = 1$  it follows that $\mu$ is a probability measure. 

\begin{proposition}
	\label{pro:compact}
	If the diameters $\Di(X_i) = \sup_{x,y \in X_i} d_i(x,y)$ of the individual spaces  satisfy 
		$\lim_{i \to \infty} a_i \Di(X_i) = 0$, then the space $X$ is compact.
\end{proposition}
\begin{proof}
Let $C$ be an open cover of $X$. Pick an element $A \in C$ that contains the marked point $x$ of $X$. Then $A$ has an $\epsilon$-neighbourhood of $x$ as a subset for some $\epsilon>0$. For any $i \ge 1$ with $a_i \Di(X_i)< \epsilon$ it follows that $\mathrm{can}_i(X_i) \subset A$. By the assumption $\lim_{i \to \infty} a_i \Di(X_i) = 0$ it follows that there exists $i_0 \ge 1$ such that $\mathrm{can}_i(X_i) \subset A$ for all $i > i_0$. Since $C$ covers the compact subset $\bigcup_{i=1}^{i_0} \mathrm{can}_i(X_i)$, it follows that there exists a finite subset $C' \subset C$ that covers $\bigcup_{i=1}^{i_0} \mathrm{can}_i(X_i)$. Hence $C' \cup \{A\}$ is a finite cover of~$X$.
\end{proof}

Suppose that $\alpha>0$ and $\theta>0$, and let $V_1>V_2> \ldots$ denote the ordered points of $\mathrm{PD}(\alpha, \theta)$. As shown by Pitman and Yor~\cite[Prop. 10, Prop. 14]{MR1434129}, the limit $\lim_{n \to \infty} n^{1/\alpha} V_n$ exists almost surely.  

Given $1 < \lambda \le 2$, suppose that   $(X_i)_{i \ge 1}$ are independent copies of the $\lambda$-stable tree. Then by~\cite[Thm. 1.5]{MR3634265} (see also~\cite{MR3651047})
\[
	\Pr{\He(X_1) > x} \sim c_\beta x^{1 + \lambda/2} \exp(-(\lambda-1)^{1/(\lambda-1)}x^\lambda)
\]
as $x \to \infty$ for some constant $c_\beta >0$.

Hence  by Borel-Cantelli it holds almost surely that  $\Di(X_i) \le 2 \He(X_i) \le  \log (i)$ for almost all $i \ge 1$. In particular, Proposition~\ref{pro:compact} ensures that  for all $s>0$ the random space $\cS(\alpha, \theta, s, L_{\lambda})$ is almost surely compact, with $L_{\lambda}$ referring to the law of the $\lambda$-stable tree.

\section{Dilute Gibbs partitions and the two-parameter Poisson--Dirichlet distribution}

We recall facts on the Gibbs partition model~\cite{MR2245368} and  a connection to the two-parameter Poisson--Dirichlet distribution in a dilute regime~\cite{gibbsmcta,2021arXiv210303751B}, facilitated by calculations of~\cite{MR2597584}.

\subsection{The Gibbs partition model}

Let $S$ be a given finite non-empty set. For our purposes, we define a \emph{partition} of $S$ as a multi-set $P$ of subsets of $S$ that are pairwise disjoint and whose union equals $S$.  We explicitly allow one or several of the members of $S$ to be empty, which is why the partition $P$ is a multi-set and not a regular set. The elements of $P$ are its \emph{components}, and $k \ge 1$ is hence its number of components. We may form the (infinite) collection $\mathrm{Part}(S)$ of all partitions of $S$.

Let $\bm{v} = (v_i)_{i \ge 1}$ and $\bm{w} = (w_i)_{i \ge 0}$ be two sequences of non-negative real numbers such that $w_0 < 1$. This allows us to assign a \emph{weight}
\[
u(P) =  |P|!v_{|P|} \prod_{Q \in P} |Q|!w_{|Q|} 
\]
to each partition $P \in \mathrm{Part}(S)$. Let $n \ge 1$ denote the number of elements of $S$. We define the  \emph{partition function}   by
\[
u_n =  \frac{1}{n!}\sum_{P \in \mathrm{Part}(S)} u(P).
\]
Since $w_0<1$ we have $u_n \in [0, \infty[$. Note also that $u_n$ only depends on the cardinality $n$ of the set $S$.
It is easy to see that the \emph{generating series} $V(x) = \sum_{i \ge 1} v_i x^i$, $W(x) = \sum_{i \ge 0} w_i x^i$, and $U(x) = \sum_{i \ge 0} u_i x^i$ satisfy
\[
U(x) = V(W(x))
\]
In the literature on analytic combinatorics, this relation is called a \emph{composition schema}. We let $\rho_u, \rho_v, \rho_w \in [0, \infty]$ denote the radii of convergence of the series $U(x)$, $V(x)$, and $W(x)$. 

For any integer $n \ge 1$ with $u_n>0$ the  \emph{Gibbs partition} model associated to the weight sequences $\bm{v}$ and $\bm{w}$ describes a random element $P_n$ of   $\mathrm{Part}([n])$ for $[n] := \{1, \ldots, n\}$, with 
\[
\Pr{P_n = P} = \frac{u(P)}{n!u_n}
\]
for each partition $P \in \mathrm{Part}([n])$. We let $N_n$ denote the number of components of $P_n$, and $K_{(1)} \ge K_{(2)} \ge \ldots$ their sizes in nonincreasing order.

\subsection{The dilute regime}

The work~\cite{2021arXiv210303751B} studies Gibbs partitions using analytic methods, with a focus on a regime where the number of components scales at the order $n^\alpha$ for a parameter $0< \alpha <1$. An approach to their study using probabilistic methods is given  in~\cite{gibbsmcta}. The following statement corresponds to~\cite[Thm. 3.13]{gibbsmcta}. Compare with~\cite[Thm. 4.1]{2021arXiv210303751B} that uses slightly different assumptions.

\begin{proposition}
	\label{pro:dilute1}
	Suppose that $\rho_v = W(\rho_w)$. Furthermore, suppose that
	\[
	v_n = L_v(n) n^{-\beta -1}\rho_v^{-n} \qquad \text{and} \qquad w_n \sim c_w n^{- \alpha -1} \rho_w^{-n}
	\]	
	for a slowly varying function~$L_v$, a constant $c_w>0$, and exponents $0<\alpha, \beta <1$. Let $f$ denote the density function of the stable distribution $S_\alpha(\gamma, 1,0)$ for
	\[
	\gamma= \left(\frac{c_w}{W(\rho_w)\alpha} \Gamma(1- \alpha) \cos \frac{\pi \alpha}{2} \right)^{1/\alpha}.
	\]
	Then
	\begin{align}
		\label{eq:diluteclt}
		\frac{N_n}{n^{\alpha}} \convdis Z
	\end{align}
	for a random variable $Z>0$ with density function
	\[
	\tilde{f}(x) = \frac{1}{\alpha \Ex{(X_{\alpha}(\gamma, 1,0))^{\alpha\beta}}} \frac{f\left(\frac{1}{x^{1/\alpha}}\right)}{x^{1 + \beta +1/\alpha}}.
	\]
	For any constant $\delta>0$ we have a local limit theorem
	\begin{align}
		\label{eq:dilutellt}
		\lim_{n \to \infty} \sup_{\ell \ge \delta n^{\alpha}} \left|n^{\alpha} \Pr{ N_n = \ell} - \tilde{f}(\ell / n^{\alpha})  \right| =0.
	\end{align}
\end{proposition}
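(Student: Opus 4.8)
The plan is to reduce the statement to the classical local limit theory for sums of independent heavy-tailed random variables, following the probabilistic approach of \cite{gibbsmcta}. The starting point is the two-stage description of the Gibbs model afforded by the composition $U = V(W)$: one may sample $N_n$ by declaring $\Pr{N_n = k} \propto v_k [x^n] W(x)^k$, and then, conditionally on $N_n = k$, the ordered component sizes as $\xi_1, \ldots, \xi_k$ conditioned on $\xi_1 + \cdots + \xi_k = n$, where $\xi$ is the Boltzmann variable obtained by exponentially tilting $W$ at its radius of convergence, $\Pr{\xi = j} = w_j \rho_w^j / W(\rho_w)$. This is a genuine probability law since $W(\rho_w) = \rho_v < \infty$ by hypothesis, and this particular critical tilt is what renders the weights $v_k \rho_v^k = L_v(k)\, k^{-\beta - 1}$ a clean regularly varying sequence. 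Writing $S_k = \xi_1 + \cdots + \xi_k$, the Boltzmann identity $[x^n] W(x)^k = \rho_v^k \rho_w^{-n} \Pr{S_k = n}$ turns the two-stage description into
\[
	\Pr{N_n = k} = \frac{v_k [x^n] W(x)^k}{u_n} = \frac{L_v(k)\, k^{-\beta - 1}\, \Pr{S_k = n}}{\rho_w^n u_n}, \qquad \rho_w^n u_n = \sum_{k \ge 1} L_v(k)\, k^{-\beta - 1}\, \Pr{S_k = n},
\]
the second identity coming from $\sum_k \Pr{N_n = k} = 1$. Thus both \eqref{eq:diluteclt} and \eqref{eq:dilutellt} follow once one controls the array $\big(\Pr{S_k = n}\big)_{k,n}$ and evaluates this sum.

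Next I would collect the stable inputs. Since $w_n \sim c_w n^{-\alpha - 1} \rho_w^{-n}$ with $0 < \alpha < 1$, the tilted variable has regularly varying tail $\Pr{\xi \ge j} \sim \tfrac{c_w}{\alpha W(\rho_w)}\, j^{-\alpha}$, hence lies in the domain of attraction of the spectrally positive $\alpha$-stable law; matching the tail constant against the characteristic function recalled in the notation section (via the computations of \cite{MR2597584}) gives $S_k / k^{1/\alpha} \convdis X_\alpha(\gamma, 1, 0)$ with precisely the $\gamma$ of the statement, together with the Gnedenko local limit theorem $\sup_{m} \big| k^{1/\alpha} \Pr{S_k = m} - f(m / k^{1/\alpha}) \big| \to 0$ as $k \to \infty$. (If $\{ j : w_j > 0 \}$ is not aperiodic one passes to its span; the exceptional residue classes play no role because in \eqref{eq:dilutellt} only indices $\ell \ge \delta n^\alpha$ are considered.) In particular $k^{1/\alpha}\, \Pr{S_k = n} \to f(x^{-1/\alpha})$ whenever $k = \lfloor x n^\alpha \rfloor$ for a fixed $x > 0$.

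I would then plug this into the sum for $u_n$. Substituting $k = t n^\alpha$ turns $\sum_k L_v(k)\, k^{-\beta - 1}\, \Pr{S_k = n}$ into a Riemann sum; using $L_v(t n^\alpha) \sim L_v(n^\alpha)$ by slow variation and the change of variables $s = t^{-1/\alpha}$, the limiting integral becomes $\alpha \int_0^\infty s^{\alpha \beta} f(s)\, \mathrm{d}s = \alpha\, \Exb{X_\alpha(\gamma, 1, 0)^{\alpha \beta}}$, which is finite since $0 < \alpha \beta < \alpha$. This yields $u_n \sim \alpha\, \Exb{X_\alpha(\gamma, 1, 0)^{\alpha \beta}}\, L_v(n^\alpha)\, n^{-\alpha \beta - 1}\, \rho_w^{-n}$, a probabilistic recovery of the composition-scheme asymptotics. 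Dividing the formula for $\Pr{N_n = \ell}$ by this estimate and inserting $\ell = \lfloor x n^\alpha \rfloor$ gives $n^\alpha \Pr{N_n = \ell} \to \tilde f(x)$ with $\tilde f$ as defined; upgrading this pointwise convergence to the uniform statement over $\ell \ge \delta n^\alpha$ is \eqref{eq:dilutellt}, the same change of variables shows $\int_0^\infty \tilde f = 1$, and \eqref{eq:diluteclt} then follows from \eqref{eq:dilutellt} together with the tightness of $(N_n / n^\alpha)$, which is afforded by the deviation bounds discussed next.

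The hard part is not the algebra but justifying the passage from the pointwise convergence of $\big(\Pr{S_k = n}\big)$ to both the summation above and the uniformity in $\ell$: one needs a bound on $\Pr{S_k = n}$ valid simultaneously for \emph{all} $k$. For $k \gtrsim n^\alpha$ the concentration estimate $\sup_m \Pr{S_k = m} \le C k^{-1/\alpha}$ does the job, but it is far too lossy for $k = o(n^\alpha)$, where $\{ S_k = n \}$ is dominated by a single big jump and one must instead use the sharp estimate $\Pr{S_k = n} \le C k\, \Pr{\xi = n} \asymp C k\, n^{-\alpha - 1}$. With these two inputs the contributions of the ranges $k \le \varepsilon n^\alpha$ and $k \ge A n^\alpha$ are bounded, uniformly in $n$, by $O(\varepsilon^{1 - \beta})$ and $O(A^{-\beta - 1/\alpha})$ times the main term respectively --- which is exactly the mass of $\tilde f$ near $0$ and near $\infty$ --- so dominated convergence applies and the bulk concentrates at $k \asymp n^\alpha$. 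Assembling these deviation estimates with the Gnedenko local limit theorem is the technical core; it is carried out in \cite[Thm.~3.13]{gibbsmcta}, and an alternative derivation by singularity analysis of $V(W(x))$ is given in \cite[Thm.~4.1]{2021arXiv210303751B}.
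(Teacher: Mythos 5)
The paper offers no proof of this proposition --- it is imported verbatim as \cite[Thm.~3.13]{gibbsmcta} --- so the only meaningful comparison is with that reference, whose probabilistic argument your sketch reconstructs faithfully and correctly: the two-stage description $\Pr{N_n=k}\propto v_k[x^n]W(x)^k$, the Boltzmann tilt turning $[x^n]W(x)^k$ into $\rho_v^k\rho_w^{-n}\Pr{S_k=n}$, the Gnedenko local limit theorem for the spectrally positive $\alpha$-stable law with the stated $\gamma$, and the Riemann-sum evaluation of $\rho_w^nu_n$ recovering~\eqref{eq:unasymptotics}, with all exponent bookkeeping checking out (including the $O(\varepsilon^{1-\beta})$ and $O(A^{-\beta-1/\alpha})$ truncation errors). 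Two minor remarks: the citation of \cite{MR2597584} for matching the tail constant to the characteristic function is misplaced (that paper concerns Poisson--Dirichlet correlation functions; the relevant source is the stable-law survey \cite{2011arXiv1112.0220J}), and the one genuinely hard ingredient --- a bound on $\Pr{S_k=n}$ uniform over all $k$ simultaneously, combining the concentration estimate for $k\gtrsim n^\alpha$ with the one-big-jump estimate for $k=o(n^\alpha)$ --- is correctly identified but deferred to the very theorem being proved, so the proposal is an accurate outline rather than a self-contained proof.
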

An important ingredient in the proof are results by Doney~\cite[Thm. 3]{MR1440141} and Bloznelius~\cite[Thm. 1, (iv) and Eq. (133)]{MR4038060} which ensure
\begin{align}
	\label{eq:unasymptotics}
	u_n  \sim n^{-1-\alpha \beta } L_v(n^\alpha) \alpha \Ex{(X_{\alpha}(\gamma, 1,0))^{\alpha \beta}} \rho_w^{-n}.
\end{align}
By~\cite[Thm. 5.1, Ex. 5.5]{2011arXiv1112.0220J} the factor involving the moments of an $\alpha$-stable random variable in Equation~\eqref{eq:unasymptotics}, which also appears in the expression for the density $\tilde{f}(x)$, may be evaluated to
\begin{align*}
\alpha \Ex{(X_{\alpha}(\gamma, 1,0))^{\alpha \beta}}  = 	\frac{\Gamma(1 - \beta) \alpha^{1 - \beta} \left( \frac{c_w}{W(\rho_w)} \Gamma(1-\alpha) \right)^\beta}{\Gamma(1- \alpha\beta)}
\end{align*}
As explained in~\cite[Sec. 3.4]{gibbsmcta}, in the setting of Proposition~\ref{pro:dilute1} the density of $N_n$ admits the expression
\begin{align}
	\label{eq:extref}
	\Pr{N_n = \ell} = \frac{v_\ell \rho_v^\ell}{ u_n \rho_w^n} \Pr{S_\ell = n},
\end{align}
with $S_\ell$ denoting the sum of $\ell$ independent copies of a random variable with probability generating series $W(\rho_w z) / W(\rho_w)$. The result~\cite[Lem. 4 (i), Eq. (3.14)]{MR1440141} states precise expressions and bounds for $\Pr{S_\ell = n}$ when $\ell$ grows faster than $n^\alpha$, which imply:

\begin{proposition}
	\label{prop:tight445}
	Under the same assumptions of Proposition~\ref{pro:dilute1}:
	\begin{enumerate}
		\item For each $A>0$ there exist $B>0$ such that 
			\[
				\Pr{S_\ell = n} \le \exp(-B \ell)
			\]
			uniformly for $\ell \ge A n$.
		\item There exist $A, B, C, D>0$ such that
		\[
			\Pr{S_\ell = n} \le  C \sqrt{\ell^{\frac{1}{1-\alpha}} n^{-\frac{2 - \alpha}{1-\alpha} }} \exp\left(- D \ell^{\frac{1}{1-\alpha}} n^{- \frac{\alpha}{1- \alpha}}\right)
		\]
		uniformly for $An^{\alpha} \le \ell \le B n$.
	\end{enumerate}
\end{proposition}
In fact,~\cite[Lem. 4]{MR1440141} makes  more precise statements, but we won't require the  expressions and notations for the involved constants.

\paragraph*{Extremal component sizes}
Suppose that the assumptions of Proposition~\ref{pro:dilute1} hold, and define the following point process on $]0,1]$
\[
\Upsilon_n = \sum_{\substack{1 \le i \le N_n \\ K(i) > 0}} \delta_{K(i) / n},
\]
with $\delta$ referring to the Dirac measure. It was shown in~\cite[Cor. 3.15]{gibbsmcta} that 
\begin{align}
	\label{eq:upsilon}
\Upsilon_n \convdis \Upsilon_{\alpha, \beta} 
\end{align}
as $n \to \infty$, for a point process $\Upsilon_{\alpha, \beta}$ on $]0,1]$ that only depends on $\alpha$ and $\beta$. At the time of writing the paper~\cite{gibbsmcta}, the author  did not recognize the distribution of $\Upsilon_{\alpha, \beta}$, and merely observed basic properties. For example, that almost surely $\Upsilon_{\alpha, \beta}$ has infinitely many points that sum up to~$1$, and that the  distribution function of the $k$th largest point may be determined~\cite[Prop. 3.16]{gibbsmcta}. In the proof of~\cite[Cor. 3.15]{gibbsmcta}, it was also observed that for each  integer $m \ge 1$ the $m$th correlation function of $\Upsilon_{\alpha, \beta}$ is given by
\begin{align*}
	   \Exb{ \left(\mathrm{Poi}\left( \frac{c_w}{W(\rho_w)}Z\right)\right)_m }  \one_{\substack{{x_1 + \ldots + x_m \le 1} \\ {x_1, \ldots, x_m \ge 0}}} \frac{(1-x_1 - \ldots - x_m)^{\alpha(m-\beta)-1}}{x_1^{\alpha+1}\cdots x_m^{\alpha+1}  },
\end{align*}
with the first factor denoting the $m$th factorial moment of the Poisson random variable $\mathrm{Poi}\left( \frac{c_w}{W(\rho_w)}Z\right)$ with random parameter $\frac{c_w}{W(\rho_w)}Z$, so that 
\[
\Pr{\mathrm{Poi}(\upsilon Z) = k} = \Exb{ \frac{1}{k!} \left(\frac{c_w}{W(\rho_w)} Z\right)^k \exp\left(-\frac{c_w}{W(\rho_w)} Z\right) }, \qquad k \ge 0.
\]
The distribution of $\frac{c_w}{W(\rho_w)}Z$ in fact only depends on $\alpha$ and $\beta$, so that
\begin{align*}
	\Exb{ \left(\mathrm{Poi}\left( \frac{c_w}{W(\rho_w)}Z\right)\right)_m } 
	= \left( \frac{ \alpha } {\Gamma(1- \alpha)} \right)^m  \frac{\Gamma(1 - \alpha \beta) \Gamma(m + 1 - \beta)}{\Gamma(1 + \alpha(m-\beta)) \Gamma(1 - \beta)}.
\end{align*}
The author is grateful to  Markus Heydenreich for pointing out a connection to the two-parameter Poisson--Dirichlet process $\mathrm{PD}(\alpha, \theta)$ introduced by Pitman and Yor~\cite{MR1434129}: Handa~\cite[Thm. 2.1]{MR2597584} determined the correlation functions of $\mathrm{PD}(\alpha, \theta)$. For $\theta= -\alpha \beta$, they agree with those of $\Upsilon_{\alpha, \beta}$. Hence, using the method of moments, we may apply~\cite[Thm. 3.1]{MR818219} to deduce
\begin{align}
	\label{eq:pd}
\Upsilon_{\alpha, \beta} \eqdist \mathrm{PD}(\alpha, -\alpha \beta).
\end{align}

We are going to use the convergence~\eqref{eq:upsilon} of the discrete model to verify a result about concatenations of this process. Compare also with a result on concatenations~\cite[Prop. 22]{MR1434129} for a different range of parameters.

\begin{lemma}
	\label{le:combine}
	Let $0< \alpha_1, \alpha_2, \alpha_3 < 1$. Let $X_1 > X_2 > \ldots>0$ denote the ranked points of $\Upsilon_{\alpha_1 \alpha_2,  \alpha_3}$ and for each integer $i \ge 1$ let $Y_{i,1} > Y_{i,2} > \ldots>0$ denote an independent copy of the ranked points of $\Upsilon_{\alpha_1, \alpha_2}$. Then $(X_i Y_{i,j})_{i, j \ge 1}$ is distributed like the points of $\Upsilon_{\alpha_1, \alpha_2 \alpha_3}$.
\end{lemma}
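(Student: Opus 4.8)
The plan is to exploit the equality in law $\Upsilon_{\alpha,\beta} \eqdist \mathrm{PD}(\alpha,-\alpha\beta)$ together with the discrete construction behind \eqref{eq:upsilon}, realizing the concatenation on the level of the underlying Gibbs partitions rather than trying to manipulate Poisson--Dirichlet processes directly. Concretely, I would set up an iterated (two-level) dilute Gibbs partition: choose weight sequences so that the outer composition schema has exponents realizing $\Upsilon_{\alpha_1\alpha_2,\alpha_3}$ — i.e. outer "singleton-type" exponent $\alpha_1\alpha_2$ and outer "$v$"-exponent governed by $\alpha_3$ — and then replace each outer component, conditionally on its size $m$, by an independent inner dilute Gibbs partition of $[m]$ tuned to have exponents realizing $\Upsilon_{\alpha_1,\alpha_2}$. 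One then checks that the composed two-level partition, viewed as a single partition of $[n]$ by forgetting the outer grouping, is again a dilute Gibbs partition whose parameters one computes from the composition $U = V\circ W$ of generating series; the key arithmetic is that composing a schema with exponent-pair $(\alpha_1,\alpha_2)$ into one with exponent-pair $(\alpha_1\alpha_2,\alpha_3)$ yields a schema with exponent-pair $(\alpha_1, \alpha_2\alpha_3)$, matching $\Upsilon_{\alpha_1,\alpha_2\alpha_3}$. Passing to the $n\to\infty$ limit using \eqref{eq:upsilon} on all three levels then identifies the ranked points of $\Upsilon_{\alpha_1,\alpha_2\alpha_3}$ with $(X_i Y_{i,j})_{i,j\ge1}$.

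In more detail, the steps I would carry out are as follows. First, fix concrete analytic weight sequences: let $W$ have a $1/2$-type singularity with $w_n \sim c_w n^{-\alpha_1-1}\rho_w^{-n}$ so the inner schema has small exponent $\alpha_1$, and pick the inner $v$-sequence with $v_n = L(n)n^{-\alpha_2-1}\rho_v^{-n}$ and $\rho_v = W(\rho_w)$, so by Proposition~\ref{pro:dilute1} and \eqref{eq:upsilon} the inner component-size point process converges to $\Upsilon_{\alpha_1,\alpha_2}$. Second, regard $U(x) = V(W(x))$: by the transfer/composition asymptotics (e.g. \eqref{eq:unasymptotics} applied to the inner schema, giving $u_n \sim n^{-1-\alpha_1\alpha_2}\tilde L(n)\rho_w^{-n}$) the series $U$ itself has singularity exponent $\alpha_1\alpha_2$. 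Third, build the outer schema by composing $U$ into a further outer $V^{\mathrm{out}}$ whose coefficients decay like $n^{-\alpha_3-1}$ with radius of convergence equal to $U(\rho_w)$; the resulting top-level series is $V^{\mathrm{out}}\circ U = V^{\mathrm{out}}\circ V \circ W$, which by the same transfer lemma has exponent $\alpha_1\alpha_2\cdot$(something) — here I must be careful: composing exponent $\alpha_1\alpha_2$ inside an exponent-$\alpha_3$ outer schema produces a schema of exponent $\alpha_1\alpha_2\alpha_3$ at the coarse level but of exponent $\alpha_1\alpha_2$ for the outer-component count, so the outer partition's component sizes (rescaled by $n^{-1}$) converge to $\Upsilon_{\alpha_1\alpha_2,\alpha_3}$ while the refined sizes converge to $\Upsilon_{\alpha_1,\alpha_2\alpha_3}$. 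Fourth, observe that conditionally on the outer partition having blocks of sizes $M_1,M_2,\dots$ (rescaled, converging to $nX_1, nX_2,\dots$), the induced inner partitions are independent dilute Gibbs partitions of $[M_i]$, each of which after rescaling by $M_i^{-1}$ converges to $\Upsilon_{\alpha_1,\alpha_2}$, i.e. has ranked points $(Y_{i,j})_j$. Fifth, a block of the fine partition of $[n]$ inside outer block $i$ has size $\approx M_i \cdot (\text{inner fraction}) \approx n X_i Y_{i,j}$; assembling over all $i$ and invoking joint convergence (the outer process and the collection of inner processes being asymptotically independent, which follows from the conditional independence structure) gives that the fine point process converges to the law of $(X_i Y_{i,j})_{i,j}$. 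Since the fine point process also converges to $\Upsilon_{\alpha_1,\alpha_2\alpha_3}$, the two laws coincide.

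The main obstacle I anticipate is making the passage to the limit rigorous at the level of point processes while handling the double array and the conditioning. One has to argue that joint convergence of (outer ranked sizes) together with (the family of inner ranked fractions) holds, with the limit being a Poisson--Dirichlet process together with independent Poisson--Dirichlet processes — i.e. that there is no loss of mass to "dust" and that truncating to the $K$ largest outer blocks and the $K$ largest inner fractions within each captures all but $\varepsilon$ of the total mass uniformly in $n$. Concretely I would control the tail $\sum_{i>K}M_i/n$ and $\sum_{j>K}(\text{inner}_i)$ using the a.s. summability-to-one of $\Upsilon$ established in~\cite{gibbsmcta} plus uniform integrability from \eqref{eq:dilutellt}, then deduce convergence of the full fine point process from convergence of its truncations via a standard $\varepsilon/3$ argument in the topology of point measures on $]0,1]$. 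A cleaner alternative, which I would also consider, is to bypass the limit entirely and compute directly: using Handa's correlation functions for $\mathrm{PD}$ and the known correlation functions of $\Upsilon_{\alpha,\beta}$ recalled above, verify that the $m$th correlation function of $(X_i Y_{i,j})_{i,j}$ — obtained by the standard formula for correlation functions of a "Poisson--Dirichlet subordinated by independent Poisson--Dirichlets" array, which is a product/convolution of the individual correlation measures — equals that of $\Upsilon_{\alpha_1,\alpha_2\alpha_3}$, and then invoke the method of moments via~\cite[Thm.~3.1]{MR818219} exactly as in the derivation of \eqref{eq:pd}. This second route reduces the whole lemma to a Gamma-function identity, at the cost of a slightly involved bookkeeping of factorial moments; I would likely present the discrete-construction argument as the main proof and remark on the correlation-function check as a cross-verification.
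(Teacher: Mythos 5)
Your main argument is exactly the paper's proof: realize the concatenation discretely via the three-fold composition scheme $R_3(R_2(R_1(z)))$ with power-law weights of exponents $\alpha_1,\alpha_2,\alpha_3$, compute the limit of the innermost component sizes under the two bracketings $\bigl(R_3\circ R_2\bigr)\circ R_1$ and $R_3\circ\bigl(R_2\circ R_1\bigr)$ using \eqref{eq:unasymptotics} and \eqref{eq:upsilon}, and conclude by associativity that $\Upsilon_{\alpha_1,\alpha_2\alpha_3}$ and $(X_iY_{i,j})_{i,j\ge 1}$ coincide in law. Your additional care about joint convergence and mass not escaping to dust, and the alternative correlation-function cross-check via Handa's formulas, go beyond what the paper writes out but do not change the route.
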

\begin{proof}
	Consider power law probability weights $r_{i,n} \sim n^{-\alpha_i -1}$ for $i=1,2,3$ as $n \to \infty$ with probability generating functions $R_i(z) := \sum_{n \ge 0} r_{i,n} z^n$. Our proof strategy is to determine the limit of the extremal $R_3$-component sizes in the composition scheme $R_3(R_2(R_1(z)))$ in two ways that are equivalent by the associative law.
	
	If we set $V(z) = R_3(R_2(z))$ and $W(z) =R_1(z)$, then the $n$-th coefficient satisfies by~\cite[Thm. 1, (iv) and Eq. (133)]{MR4038060} that
	\begin{align*}
		[z^n]V(z) \sim n^{-1-\alpha_2 \alpha_3}  \alpha_2 \Exb{\left(X_{\alpha_2}\left(\left(\frac{\Gamma(1- \alpha_2)}{\alpha_2}  \cos \frac{\pi \alpha_2}{2} \right)^{1/\alpha_2}, 1,0\right)\right)^{\alpha_2 \alpha_3}}.
	\end{align*}
Thus, the assumptions for applying Equation~\eqref{eq:upsilon} are satisfied, yielding that the $R_1$-component sizes in the Gibbs partition associated to the composition scheme $V(W(z))$ rescaled by $n^{-1}$ admit the point process $\Upsilon_{\alpha_3, \alpha_1 \alpha_2}$ as limit.

If instead we set $V(z) = R_3(z)$ and $W(z) = R_2(R_1(z))$, then the coefficients of $W(z)$ are probability weights of an asymptotic power-law with exponent $-\alpha_1\alpha_2-1$, and we obtain that the $R_2(R_1)$-component sizes in the  Gibbs partition associated to the composition scheme $V(W(z))$ admit after rescaling by $n^{-1}$ the point process $\Upsilon_{\alpha_1 \alpha_2, \alpha_3}$ as limit. Each of these large $R_2(R_1)$-components splits into $R_1$-components according to the Gibbs partition model associated to the composition scheme $R_2(R_1(z))$. Hence, by applying~\eqref{eq:upsilon} for a second time, each of the $R_2(R_1)$-components admits a copy of $\Upsilon_{\alpha_1, \alpha_2}$  as limit after rescaling by its size. Thus the $R_1$-component sizes obtained in this way admit after rescaling by $n^{-1}$ the joint distributional limit $(X_i Y_{i,j})_{i, j \ge 1}$.

By the associative law, the two limiting distributions must agree. Thus, the family $(X_i Y_{i,j})_{i, j \ge 1}$ is distributed like the points of $\Upsilon_{\alpha_1, \alpha_2 \alpha_3}$.
\end{proof}

\section{Scaling limits of Kemp's $d$-dimensional binary trees}

\label{sec:mainproof}

We start by proving the tail-bounds for the height.

\begin{proof}[Proof of Theorem~\ref{te:bound}]
	The proof is by induction on $d$. For $d=1$, such bounds have already been established in~\cite{MR3077536} in the context of branching processes. See also~\cite{addarioberry2022random} for a generalization to bounds requiring no assumptions on the offspring distribution. For $d \ge 2$ we let $1 \le N_n \le n/2$ denote the number of vertices of the first level of $\mF_{d,n}$, and $K_{(1)} \ge \ldots \ge K_{(N_n)} \ge 1$ the  number of vertices in the trees attached to the first level. By induction hypothesis, there exist constants $C, c>0$ independent of $n$ and $x$ such that for $1 \le i \le d-1$
	\[
	\Pr{\He(\mF_{i,n}) \ge x} \le C \exp(-c x^2/n).
	\]
	If $\He(\mF_{d,n}) \ge x$, then the first-level has height at least $x/2$ or one of the trees attached to it has height at least $x/2$. Let us denote these two events by $\cE_1$ and $\cE_2$. Conditioned by its size, the first level is distributed like a uniform binary tree. Hence, summing over the number of vertices in the first level, we obtain using the induction hypothesis
	\begin{align*}
		\Pr{\cE_1} &= \sum_{\ell=1}^{\lfloor n/2 \rfloor} \Pr{N_n = \ell} \Pr{\He(\mF_{1,\ell}) > x/2} \\
		&\le \sum_{\ell=1}^{\lfloor n/2 \rfloor} \Pr{N_n = \ell} C \exp(-c x^2/4\ell) \\
		&\le C \exp(-c x^2 / 4n).
	\end{align*}
	Likewise, summing over the number of vertices in the second level, we obtain by the induction hypothesis
	\begin{multline}
		\label{eq:klemptner}
		\Pr{\cE_{2}}  =  \sum_{\ell=1}^{\lfloor n/2 \rfloor} \Pr{N_n = \ell} \sum_{ \substack{ k_1 \ge \ldots \ge k_\ell \ge 1 \\ k_1 + \ldots + k_\ell = n - \ell}} \Pr{K_{(1)} = k_1, \ldots, K_{(\ell)} = k_\ell } p_{k_1, \ldots, k_\ell},
	\end{multline}
	with $p_{k_1, \ldots, k_\ell}$ denoting the probability that in a forest of $\ell$ random $(d-1)$-dimensional supertrees with number of vertices $\bm{k} = (k_1, \ldots, k_\ell)$ at least one of them has height at least $x/2$. We set $\|\bm{k}\|_1 = \sum_{i=1}^\ell k_i$ and of course in the sum above we only consider indices for which $\|\bm{k}\|_1 = n-\ell$. Using the union bound and the induction hypothesis we obtain 
	\begin{align*}
		p_{k_1, \ldots, k_\ell} &\le  \sum_{i=1}^\ell \Pr{\He(\mF_{d-1,k_i}) > x/2} \\
		&\le C \sum_{i=1}^\ell \exp(-c x^2/4k_i) \\
		&= C \sum_{i=1}^\ell \left( \exp(-c x^2/4\|\bm{k}\|_1) \right)^{1/q_i},
	\end{align*}
	with $q_i = k_i / \|\bm{k}\|_1$ so that $q_1 + \ldots + q_\ell= 1$. We hence seek to maximize a function \[
	f_\ell(x_1, \ldots, x_\ell) = \sum_{i=1}^\ell A^{1/x_i}
	\]
	for $A =  \exp(-c x^2/(4\|\bm{k}\|_1))$ on the set 
	\[
	M_\ell = \{ (x_i)_{1 \le i \le \ell} \in \ndR_{>0}^\ell  \mid \sum_{i=1}^\ell x_i = 1\}.
	\]
	
		Let us, for now, consider the case where $x$ is large enough so that
	\begin{align}
		\label{eq:bandaid}
		c x^2/(4\|\bm{k}\|_1) > 2.
	\end{align}
	The function $f_\ell$ is continuous on $\ndR_{\ge 0}^\ell$  and hence assumes a maximum on the closed hull $\overline{M}_\ell$. Let $\bm{y} \in \overline{M}_\ell$ denote a point where this maximum is assumed. If $\bm{y} \in M_\ell$, then by Lagrange multipliers it follows that $\bm{y} = (1/\ell, \ldots, 1 / \ell)$. This is because 
	\[
		\frac{\mathrm{d}^2}{\mathrm{d} q^2}A^{1/q} = \frac{A^{1/q}}{q^4} \log(A) (2q + \log(A)) > 0
	\]
	for $0<q<1$ due to Inequality~\eqref{eq:bandaid}, which ensures $\log(A) < -2$. If $\bm{y} \notin M_\ell$, then the vector $\tilde{\bm{y}}$, obtained from $\bm{y}$ by deleting all zero coordinates, lies in $M_{\tilde{\ell}}$ for some $1 \le \tilde{\ell} < \ell$. Since $\sup_{ \tilde{\bm{x}} \in \overline{M}_{\tilde{\ell}}} f_{\tilde{\ell}}(\tilde{\bm{x}}) \le \sup_{\bm{x} \in \overline{M}_\ell } f_{\ell}(\bm{x})$, it follows that $f_{\tilde{\ell}}$ assumes its maximum on $\overline{M}_{\tilde{\ell}}$ at the point $\tilde{\bm{y}} \in M_{\tilde{\ell}}$, and clearly $f_{\tilde{\ell}}(\tilde{\bm{y}}) = f_{\ell}(\bm{y})$. By Lagrange multipliers it follows that $\tilde{\bm{y}} = (1/\tilde{\ell}, \ldots, 1/\tilde{\ell})$. 
	
	Hence it follows that 
	\begin{align}
		\label{eq:tight}
		p_{k_1, \ldots, k_\ell} &\le C  \sup_{1 \le i \le \ell} i  \exp(-c i x^2/\|\bm{k}\|_1).
	\end{align}
	Using  Inequality~\eqref{eq:bandaid}, it follows that there exists a constant $c_1>0$ that does not depend on $\ell$ or $k_1, \ldots, k_\ell$ such that 
	\begin{align*}
		p_{k_1, \ldots, k_\ell} &\le C   \exp(-c_1 x^2/\|\bm{k}\|_1)
	\end{align*}
	uniformly for all $x>0$ satisfying Inequality~\eqref{eq:bandaid}. For all $x<0$ that do not satisfy  Inequality~\eqref{eq:bandaid} we have $\exp(-c_1 x^2/\|\bm{k}\|_1) \ge \exp(- (c_1/c) 2)$. Hence, setting $C_1 = \max(C,  \exp( (c_1/c) 2))$ it follows using $	p_{k_1, \ldots, k_\ell} \le 1$ that
	\begin{align}
		\label{eq:realtight}
	p_{k_1, \ldots, k_\ell} &\le C_1   \exp(-c_1 x^2/\|\bm{k}\|_1)
\end{align}
	uniformly for all $x>0$.
	Hence, by Equation~\eqref{eq:klemptner}
	\[
	\Pr{\cE_{2}}  \le C_1   \exp(-c_1 x^2/n).
	\]
	Consequently, there exist constants $C_2, c_2>0$ that do not depend on $n$ or $x$ with
	\[
	\Pr{ \He(\mF_{d,n}) \ge x } \le \Pr{\cE_1} + \Pr{\cE_2} \le C_2   \exp(-c_2 x^2/n).
	\]
	This completes the proof.
\end{proof}

We are ready to prove our main result. As a byproduct, we recover Kemp's asymptotic formula~\eqref{eq:fdasymp} via probabilistic methods.

\begin{proof}[Proof of Theorem~\ref{te:main}]
	Let us recall some facts stated in the introduction. The equation
	\[
		F_1(x) = x(1 + 2F_1(x) + F_1(x)^2)
	\]
	may be solved for $F_1(x)$, yielding the Catalan numbers
	\[
		f_{1,n} = \frac{1}{n+1} \binom{2n}{n} \sim  \frac{1}{\sqrt{\pi}} 4^n n^{-3/2}
	\]
	and $F_1(1/4) = 1$.
	By the pioneering  invariance principle for large simply generated trees~\cite{MR1207226}, the Brownian tree is the scaling limit of $\mF_{1,n}$ after rescaling distances by $2^{-3/2} n^{-1/2}$. 
	
	Let us now proceed by induction on $d \ge 2$.  The generating series \[
	F_d(x) = F_1(xF_{d-1}(x))
	\] corresponds to the Gibbs partition model with $V(x) = F_1(x)$ and $W(x) = x F_{d-1}(x)$, so that $v_n = f_{1,n}$ and $w_n = f_{d-1,n-1}$. Using the induction hypotheses 
	$
		f_{d-1,n} \sim \frac{4^{1- 2^{-(d-1)}}}{|\Gamma(-2^{-(d-1)})|} 4^n n^{-1 -2^{-(d-1)}}
	$
	and $F_{d-1}(1/4) = 1$
	and the first order asymptotics for $f_{1,n}$ 
	it follows that  the assumptions of Proposition~\ref{pro:dilute1} are satisfied with $\alpha=2^{-(d-1)}$,  $\beta=1/2$, $L_v = \frac{1}{\sqrt{\pi}}$, $c_w = \frac{4^{ (- 2^{-(d-1)})}}{|\Gamma(-2^{-(d-1)})|}$, $\rho_w = 1/4$, and $W(\rho_w) = 1/4$. Equation~\eqref{eq:unasymptotics} hence yields
	\begin{align*}
		f_{d,n}   &\sim n^{-1-\alpha \beta } L_v(n^\alpha) \frac{\Gamma(1 - \beta) \alpha^{1 - \beta} \left( \frac{c_w}{W(\rho_w)} \Gamma(1-\alpha) \right)^\beta}{\Gamma(1- \alpha\beta)} \rho_w^{-n} \\
		&= n^{-1-2^{-d}} \frac{2^{2(1- 2^{-d})}}{|\Gamma(-2^{-d})|}  4^{-n}.
	\end{align*}
	Thus, we recover Equation~\eqref{eq:fdasymp}.
	
	Furthermore, by our induction hypothesis the random tree $\mF_{d-1,n}$ admits a scaling limit after rescaling distances by $2^{-3/2} n^{-1/2}$. Let $L$ denote the law of that scaling limit.  Since Proposition~\ref{pro:dilute1} applies to $V(W(x))$ it follows that the number $N_n$ of vertices in the first level of $\mF_{d, n}$  satisfies $N_n / \sqrt{n} \convdis Z>0$. Moreover, the first level of $\mF_{d,n}$ is distributed like a random tree with size $N_n$, whose diameter $D_{N_n}$ has the property that $ D_{N_n} / (2^{3/2} N_n^{1/2})$ converges in distribution towards the diameter of the Brownian tree. Thus, the diameter of the first level is $O_p(n^{1/4})$ and its number of vertices is $O_p(n^{1/2})$. Hence, if we rescale $\mF_{d,n}$ by $2^{-3/2} n^{-1/2}$ then the first level contracts to a single point with no mass.
	
	Let $X_1 > X_2 > \ldots>0$ denote the ordered points of $\Upsilon_{\alpha, \beta} \eqdist \mathrm{PD}(\alpha, -\alpha \beta)$, with $\alpha = 1/2^{d-1}$ and $\beta= 1/2$. Let $K_{(1)} \ge K_{(2)} \ge \dots$ denote the numbers of vertices in the largest tree-components $T_{(1)}, T_{(2)}, \ldots$ attached to the first level of $\mF_{d,n}$.  By Equation~\eqref{eq:upsilon} we know that 
	\begin{align}
		\label{eq:kin}
		(K_{(i)}/n)_{i \ge 1} \convdis (X_i)_{i \ge 1}.
	\end{align}
Jointly for all $i \ge 1$ we know by induction hypothesis that $T_{(i)}$ equipped with the uniform measure on its vertex set is distributed like $F_{d-1, K_{(i)}}$ and hence admits an independent sample $\tau_i$ of the law $L$ as  Gromov--Hausdorff--Prokhorov scaling limit $\tau_i$ after rescaling distances by $2^{-3/2} K_{(i)}^{-1/2}$. Thus, the limit after rescaling by $2^{-3/2} n^{-1/2}$ instead is distributed like $\tau$ with distances multiplied by the independent random variable $\sqrt{X_i}$. This convergence holds jointly for all $i \ge 1$ and  the first level  of $\mF_{d,n}$ contracts to a single point with no mass. Thus, in order to deduce that
\begin{align}
	\label{eq:step2}
	2^{-3/2} n^{-1/2}\mF_{n,d} \convdis \cS(1/2^{d-1}, -1/2^d, 1/2, L)
\end{align}
in the Gromov--Hausdorff--Prokhorov sense all that remains is to verify  tightness, which boils down to showing  for each $\epsilon>0$
\begin{align}
	\label{eq:step1}
	\lim_{k \to \infty} \limsup_{n \to \infty} \Prb{\sup_{j \ge k} \He\left(T_{(j)}\right) > \epsilon \sqrt{n}} = 0.
\end{align}
To this end, note that Inequality~\eqref{eq:realtight} implies that conditionally on  $(K_{(i)})_{i \ge 1}$ the probability for the event $\sup_{j \ge k} \He\left(T_{(j)}\right) > \epsilon \sqrt{n}$ is bounded by \[
C_1 \exp\left(-c_1  \epsilon^2 n / \sum_{i \ge k} K_{(i)} \right).
\] By Equation~\eqref{eq:kin} and $\sum_{i \ge k} K_{(i)} = n - N_n - \sum_{i=1}^{k-1} K_{(i)}$ with $N_n = o_p(n)$  we know that for each fixed $k$
\[
	n^{-1} \sum_{i \ge k} K_{(i)} \convdis \sum_{i \ge k} X_i.
\]
Since $\sum_{i \ge k} X_i$ tends in probability to zero as $k \to \infty$ this verifies Equation~\eqref{eq:step1} and hence also Equation~\eqref{eq:step2}.

It remains to explicitly describe the limit in~\eqref{eq:step2}. For $d=2$ we have that $L$ is the law $L_{\mathrm{Brownian}}$ of the Brownian tree, so the limit of $2^{-3/2} n^{-1/2}\mF_{n,2}$ is $\cS(1/2, -1/4, 1/2, L_{\mathrm{Brownian}})$. By induction hypothesis, we may hence assume that for $d>3$ we have that $L$ is the law of $\cS(1/2, -1/2^{d-1}, 1/2, L_{\mathrm{Brownian}})$. By Lemma~\ref{le:combine} and Equation~\eqref{eq:pd}  it follows that 
\[
\cS(1/2^{d-1}, -1/2^d, 1/2, L) \eqdist \cS(1/2, -1/2^d, 1/2, L_{\mathrm{Brownian}}).
\]
This completes the proof.
\end{proof}

\section{Sketching a  phase diagram}
\label{sec:phase}

The diversity of tree structures and models in probabilistic combinatorics is so vast that a complete phase diagram for supertrees is beyond the scope of this work. Instead, we describe some phases with fundamentally different behaviour and show that each phase has some level of universality with natural archetypes.

\label{sec:phases}

\subsection{The dilute regime}

 The method in the proof of Theorem~\ref{te:main} is not confined to the model under consideration. It is easily transferable to settings of composition schemes $V(W(x))$ of weighted classes   $V$ and $W$ of trees where the assumptions of Proposition~\ref{pro:dilute1} are met for some $0<\alpha, \beta < 1$, such that random $n$-sized trees from the class $W$ admit a Gromov--Hausdorff--Prokhorov limit after multiplying distances by $n^{-\delta}$ for some $0<\delta<1$, and random trees from the class $V$ with size roughly $n^\alpha$ have diameter $o_p(n^{\delta})$. This way, as in the proof of Theorem~\ref{te:main}, the first level in a large random supertree contracts to a point with no mass, and the global shape is determined by the largest tree components in the second level which scale at the order $n$ with joint fluctuations determined by an $(\alpha, - \alpha \beta)$-Poisson--Dirichlet process. Furthermore, analogously as in the proof of Theorem~\ref{te:main}, higher dimensional supertrees may be treated by induction on their dimension. Moreover, at no point we actually use that $V$ and $W$ are classes of trees. The same construction and methods work for superstructures when $V$ and $W$ are weighted classes of graphs or maps on a surface.

\subsubsection{Iterated unordered labelled trees}
Consider the class $T_1$ of labelled rooted unordered trees, so that its exponential generating series $T_1(z) = \sum_{n \ge 1} \frac{n^{n-1}}{n!}z^n$ satisfies
\[
	T_1(z) = z \exp(T_1(z)).
\]
By Aldous scaling limit~\cite{MR1085326,MR1207226} the uniform random $n$-vertex tree from the class $T_1$ admits the Brownian tree as scaling limit after multiplying distances by $1 / (2 \sqrt{n})$. For each $d \ge 2$ let $T_d$ denote the class of $d$-dimensional supertrees obtained by iterating the class $T_1$, so that
\[
	T_d(z) = T(zT_{d-1}(z)).
\]
Since $T_1(1/e) = 1$, it follows that $T_d(1/e) = 1$ for all $d \ge 1$.  Using Equation~\eqref{eq:unasymptotics} we obtain inductively that the composition scheme $T_1(zT_{d-1}(z))$ with $V(z) = T_1(z)$ and $W(z) = z T_{d-1}(z)$ satisfies the assumptions of Proposition~\ref{pro:dilute1} with $\alpha=1/2^{d-1}$ and $\beta=1/2$. Hence,
\[
	[z^n] T_d(z) \sim c_d n^{-1-1/2^d} e^n
\]
with, for $c_1 = 1 / \sqrt{2\pi}$ and
\[
	c_d = \frac{\sqrt{2 c_{d-1} |\Gamma(-1/2^{d-1})| } }{|\Gamma(-1/2^d)|}
\]
for $d \ge 2$. Solving this recursion and arguing analogously as in the proof of Theorem~\ref{te:main} and Theorem~\ref{te:bound} we obtain:

\begin{corollary}
	We have
	\[
		[z^n] T_d(z) \sim \frac{2^{1- 1/2^d}}{|\Gamma(-1/2^d)|} n^{-1-2^{-d}} e^n
	\]
	as $n \to \infty$.
	The uniform $n$-vertex supertree $\mT_{d,n}$ from the class $T_d$ equipped with the uniform measure $\mu_{\mT_{d,n}}$ on its vertex set satisfies
	\[
	(\mT_{d,n}, \frac{1}{2 \sqrt{n}} d_{\mT_{d,n}}, \mu_{\mT_{d,n}})  \convdis  \cS(1/2, -1/2^d, 1/2, L_{\mathrm{Brownian}})
	\]
	in the Gromov--Hausdorff--Prokhorov sense as $n \to \infty$. Furthermore, there are constants $C,c>0$ such that
	\[
		\Pr{\He(\mT_{d,n}) \ge x} \le C \exp(-c x^2 /n)
	\]
	uniformly for all $n$.
\end{corollary}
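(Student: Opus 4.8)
The proof runs parallel to that of Theorem~\ref{te:main}, so the plan is to import that argument and record only the modifications forced by the different archetype and scaling constant. There is no genuinely new difficulty; the one point that needs a (short) check is that the Poisson--Dirichlet indices line up.

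First I would settle the enumerative claim by solving the recursion recorded just above the statement, namely $c_1 = 1/\sqrt{2\pi}$ and $c_d = \sqrt{2 c_{d-1}|\Gamma(-1/2^{d-1})|}/|\Gamma(-1/2^d)|$. I claim $c_d = 2^{1-1/2^d}/|\Gamma(-1/2^d)|$ for all $d \ge 1$, and verify this by induction: for $d=1$ one uses $|\Gamma(-1/2)| = 2\sqrt{\pi}$, so $2^{1/2}/(2\sqrt{\pi}) = 1/\sqrt{2\pi}$; for the inductive step one substitutes $c_{d-1}|\Gamma(-1/2^{d-1})| = 2^{1-1/2^{d-1}}$ into the recursion, whereupon the gamma factors cancel and $\sqrt{2\cdot 2^{1-1/2^{d-1}}} = 2^{1-1/2^d}$ since $(2 - 1/2^{d-1})/2 = 1 - 1/2^d$.

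For the scaling limit I would proceed by induction on $d \ge 1$. The base case $d=1$ is Aldous' invariance principle for Cayley trees recalled in the text (distances rescaled by $1/(2\sqrt{n})$). For $d \ge 2$ write $T_d(z) = T_1(zT_{d-1}(z))$ as a Gibbs partition scheme with $V(z) = T_1(z)$ and $W(z) = zT_{d-1}(z)$, so that $v_n = [z^n]T_1(z) \sim \frac{1}{\sqrt{2\pi}} n^{-3/2} e^n$ by Stirling and, by Step~1 applied to $d-1$, $w_n = [z^{n-1}]T_{d-1}(z) \sim \frac{c_{d-1}}{e}\, n^{-1-1/2^{d-1}} e^n$; since $T_{d-1}(1/e) = 1$ we have $W(1/e) = 1/e = \rho_v$, so Proposition~\ref{pro:dilute1} applies with $\alpha = 1/2^{d-1}$ and $\beta = 1/2$. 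Hence the number $N_n$ of first-level vertices of $\mT_{d,n}$ satisfies $N_n/\sqrt{n} \convdis Z > 0$; conditionally on $N_n$ the first level is a uniform random tree from $T_1$ on $N_n$ vertices, so its diameter is $O_p(N_n^{1/2}) = O_p(n^{1/4})$, and after rescaling distances by $\frac{1}{2}n^{-1/2}$ the first level contracts to a single point carrying no mass. By Equation~\eqref{eq:upsilon} the normalized sizes $(K_{(i)}/n)_{i \ge 1}$ of the largest second-level components $T_{(1)}, T_{(2)}, \ldots$ converge jointly to the ranked atoms $(X_i)_{i \ge 1}$ of $\Upsilon_{1/2^{d-1}, 1/2} \eqdist \mathrm{PD}(1/2^{d-1}, -1/2^d)$, and by the induction hypothesis each $T_{(i)}$, being a uniform member of $T_{d-1}$ on $K_{(i)}$ vertices, converges under $\frac{1}{2}K_{(i)}^{-1/2} d$ to an independent sample $\tau_i$ of the law $L$ of the $(d-1)$-dimensional limit, jointly over $i$. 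Writing $\frac{1}{2}n^{-1/2}d = \sqrt{K_{(i)}/n}\cdot \frac{1}{2}K_{(i)}^{-1/2}d$ and combining these joint limits gives $\frac{1}{2}n^{-1/2}\mT_{d,n} \convdis \cS(1/2^{d-1}, -1/2^d, 1/2, L)$ in the Gromov--Hausdorff--Prokhorov sense, exactly as for $\mF_{d,n}$.

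It remains to identify $L$, and here I would argue as in the last paragraph of the proof of Theorem~\ref{te:main}. For $d = 2$, $L = L_{\mathrm{Brownian}}$ and there is nothing to do. For $d \ge 3$ the induction hypothesis gives that $L$ is the law of $\cS(1/2, -1/2^{d-1}, 1/2, L_{\mathrm{Brownian}})$, and Lemma~\ref{le:combine} together with Equation~\eqref{eq:pd}, with the indices chosen so that the reassociation produces the process $\mathrm{PD}(1/2, -1/2^d)$, yields $\cS(1/2^{d-1}, -1/2^d, 1/2, L) \eqdist \cS(1/2, -1/2^d, 1/2, L_{\mathrm{Brownian}})$, which is the claimed limit. \textbf{Main obstacle.} There is essentially no new obstacle: the whole argument is a transcription of the proof of Theorem~\ref{te:main} with the Brownian scaling constant $2^{-3/2}$ replaced by $1/2$. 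The only step that warrants real care is this final identification, where one must check that the parameters fed into Lemma~\ref{le:combine} are chosen so that the glued two-parameter Poisson--Dirichlet atoms are precisely those of $\mathrm{PD}(1/2, -1/2^d)$; verifying that the hypotheses of Proposition~\ref{pro:dilute1} hold at every inductive step is immediate from $T_{d-1}(1/e) = 1$ and the Step~1 asymptotics.
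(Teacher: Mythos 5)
Your proposal is correct and follows essentially the same route as the paper, which itself only says ``solving this recursion and arguing analogously as in the proof of Theorem~\ref{te:main}''; your induction for $c_d = 2^{1-1/2^d}/|\Gamma(-1/2^d)|$ and your verification of the hypotheses of Proposition~\ref{pro:dilute1} with $\alpha = 1/2^{d-1}$, $\beta = 1/2$, $\rho_v = W(\rho_w) = 1/e$ are exactly the intended adaptation. The final identification via Lemma~\ref{le:combine} with $\alpha_1 = 1/2$, $\alpha_2 = 1/2^{d-2}$, $\alpha_3 = 1/2$ and Equation~\eqref{eq:pd} also matches the paper's argument.
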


\subsubsection{Kemp's multidimensional simply generated trees}

The following result treats   Kemp's multidimensional simply generated trees~\cite{MR1370950} for specific weights. It serves as an example for supertrees whose scaling limit involves rescaled copies of some stable tree that isn't necessarily the Brownian tree.

Let $d \ge 2$ and for each $i\in \{1,2, \ldots, d\}$  let $\xi_i$ denote a random non-negative integer satisfying $\Pr{\xi_i= 0}>0$ and  $\Pr{\xi_i \ge 2} > 0$. Let $\mT$ denote the random $d$-dimensional  supertree where the first level is given by a $\xi_1$-Bienaym\'e--Galton--Watson tree, and for each $2 \le i \le d$ the $i$th level is obtained by adding for each vertex $v$ of the $(i-1)$th level  an edge between $v$ and an independent copy of a $\xi_i$-Bienaym\'e--Galton--Watson tree. Let $\mT_n$ denote the result of conditioning $\mT$ on having $n$ vertices. Let $\mu_{\mT_n}$ denote the uniform measure on the vertex set of~$\mT_n$.

\begin{theorem}
	\label{te:mulscal}
		 Suppose that for each $1 \le i \le d$ we have $\Ex{\xi_i} = 1$. Let $1<a_i\le 2$ be given. If $a_i\in ]1,2[$ suppose that $\Pr{\xi_i = n} \sim c_i n^{-a_i-1}$ for some constant $c_i>0$. If $a_i=2$ suppose instead that  $\Va{\xi_i}< \infty$. Furthermore, we assume that for all $0 \le i \le d-2$ we have \begin{align}
		 	\label{eq:ineqas}
		 1 - \frac{1}{a_d} > \frac{1}{a_d \cdots a_{d-i}} \left(1 - \frac{1}{a_{d-i-1}}\right).
		 \end{align} Let $L_{a_d}$ denote the law of the $a_d$-stable tree. Then there exists a constant $\kappa>0$ with
		\begin{align}
			\label{eq:mulscal}
	\left(\mT_{n}, \frac{\kappa}{ n^{1 - 1/a_d}} d_{\mT_{n}}, \mu_{\mT_{n}}\right) \convdis  \cS(1/a_d,-1/(a_1 \ldots a_d),1 - 1/a_d,L_{a_d})
	\end{align}
	as $n \to \infty$ with respect to the rooted Gromov--Hausdorff--Prokhorov metric. Furthermore, for each $0<\delta<a_d$  there exist constants $C,c>0$ such that for all $n$ and all $x>0$
	\begin{align}
			\label{eq:mulbound}
	\Pr{\He(\mT_{n}) \ge x} \le C \exp(-c (x/n^{1-1/a_d})^\delta ).
	\end{align}
	If $a_d=2$ then we may even set $\delta=2$.
\end{theorem}
The overall strategy is the same as for Theorems~\ref{te:main} and~\ref{te:bound}. We build upon stable tree scaling limits of large Bienaym\'e--Galton--Watson tree~\cite{MR1964956,MR1954248} and tail-bounds for their height~\cite{MR3077536,MR3651047, addarioberry2022random}.

\begin{proof}[Proof of Theorem~\ref{te:mulscal}]
	The assumptions entail that the probability for a $\xi_i$--Bienaym\'e--Galton--Watson tree to have $n$ vertices grows like a constant times $n^{-1-1/a_i}$, see for example~\cite[Thm. 18.14]{MR2908619}. Furthermore, there exists a constant $b_i>0$ such that the conditioned tree approaches the $a_i$-stable tree after rescaling distances by $\frac{b_i}{ n^{1 - 1/a_i}}$, see~\cite{MR1964956,MR1954248}.
	
	For $d=2$, arguing analogously as in the proof of  Theorem~\ref{te:main} it follows from Proposition~\ref{pro:dilute1} that the largest components in the second level have diameters that scale at the order $n^{1-1/a_2}$, whereas the first level has about $n^{1/a_2}$ vertices and hence its diameter scales at the order $n^{(1/a_2)(1-1/a_1)}$. Our assumption~\eqref{eq:ineqas} ensures that \[
	(1/a_2)(1-1/a_1)<1-1/a_2,
	\]
	so that the diameter of the first level scales at a smaller order than the diameter of the largest components of the second level. 
	
	Similarly, for $d > 2$ the first level has about $n^{1/(a_2 a_3 \cdots a_d)}$ vertices and hence its diameter scales at the order $n^{ (1/(a_2 a_3 \cdots a_d))(1-1/a_1)}$. Thus, if by induction the largest component attached to the first level has a diameter that scales at the order $n^{1-1/a_d}$, then Assumption~\eqref{eq:ineqas} ensures that the diameter of the first level scales at a smaller order. 
	
	Thus, we may prove the scaling limit~\eqref{eq:mulscal} by arguing completely analogously as in the proof of Theorem~\ref{te:main}. The only part where we have to be careful is the analogon of the tightness condition~\eqref{eq:step1}, with $\sqrt{n}$ replaced by $n^{1-1/a_d}$. For $a_d=2$, all arguments here (and also for Inequality~\eqref{eq:mulbound} due to the universal bounds~\cite{addarioberry2022random}) are identical, hence no adaptions are needed. For $1<a_d<2$, we have to put in extra effort to the get the necessary deviation bounds.  Hence, for the remainder of the proof we assume $1<a_d<2$.

	Furthermore, the case $d=1$ was already treated in~\cite{MR1964956,MR1954248} and~\cite{MR3651047}. Hence we assume that $d \ge 2$.

	
	We first aim to verify the bound~\eqref{eq:mulbound} as the intermediate results in the proof are then used in verifying the missing tightness condition for the scaling limit. Let $0 < \delta < a_d$ be given.
	If the tree $\mT_n$ has height at least $x$ then one of the trees in the first level or one of the supertrees attached to it has height at least $x/2$. We denote the corresponding events by $\cE_1$ and $\cE_2$.
	
	 Note that without loss of generality we may assume that 
	\begin{align}
		\label{eq:sanity}
	n^{1-1/a_d} \le x \le n.
	\end{align}
	Let us start with the event $\cE_1$. As in the proof of Theorem~\ref{te:bound}, we may write
		\begin{align}
			\label{eq:stereo1}
		\Pr{\cE_1} &= \sum_{\ell=1}^{\lfloor n/2 \rfloor} \Pr{N_n = \ell} \Pr{\He(\mF_{\ell}) > x/2} 
	\end{align}
with $\mF_\ell$ denoting a $\xi_1$-Bienaym\'e--Galton--Watson tree conditioned on having $\ell$ vertices, and $N_n$ denoting the number of vertices in the first level of $\mT_n$.

 By~\cite[Thm. 4]{MR3651047}, for each $0<\delta_1<a_1$ there exist constants $C_1, c_1>0$ such that
\begin{align}
				\label{eq:stereo2}
\Pr{\He(\mF_{\ell}) > x/2} \le C_1 \exp( -c_1 (x / \ell^{1- 1 / a_1})^{\delta_1} )
\end{align}
uniformly in $x>0$ and $\ell \ge 1$.

We first consider the case  $a_d \ge a_1$. For the bound~\eqref{eq:mulbound} it always suffices to show it for a $\delta$ that is even closer to $a_d$. Hence we may assume that $a_d > \delta \ge a_1$ and we may choose $\delta_1$ sufficiently close to $a_1$ so that
$
a_1 > \delta_1 > a_1 \frac{\delta}{a_d}.
$
Then, using Inequality~\eqref{eq:sanity}
\[
	\frac{n^{(1 - 1/a_d)\delta}}{\ell^{(1- 1/a_1)\delta_1}} \ge \frac{n^{(1 - 1/a_d)\delta}}{n^{(1- 1/a_1)\delta_1}} = n^{\delta - \delta_1 + \frac{\delta_1}{a_1} - \frac{\delta}{a_d}} \ge n^{\delta - \delta_1} \ge x^{\delta - \delta_1}.
\]
This entails
\[
	\left( \frac{x}{\ell^{1-1/a_1}} \right)^{\delta_1} \ge \left( \frac{x}{\ell^{1-1/a_d}} \right)^{\delta}.
\]
Combining this with Equation~\eqref{eq:stereo1} and Inequality~\eqref{eq:stereo2} we obtain
\[
\Pr{\cE_1} \le C_1 \exp( -c_1 (x / n^{1- 1 / a_d})^{\delta} ).
\]

Now, consider the case $a_d < a_1$. Then we may take $\delta_1$ close enough to $a_1$ so that $\delta_1> \delta$. For ease of notation, we set $\alpha = \frac{1}{a_2 \cdots a_d}$. Next, by Inequality~\eqref{eq:ineqas} we may take $\epsilon>0$ small enough (depending only on $a_1, \ldots, a_d$) so that for all $1 \le \ell \le n^{\alpha + \epsilon}$ we have 
\[
	\frac{n^{(1 - 1/a_d)\delta}}{\ell^{(1- 1/a_1)\delta_1}} \ge \frac{n^{(1 - 1/a_d)\delta}}{n^{\left(\alpha + \epsilon\right)(1- 1/a_1)\delta_1}} \ge n^{(1 - 1/a_d)(\delta - \delta_1)} \ge x^{\delta - \delta_1}.
\]
For the last inequality sign we applied $\delta_1 > \delta$ and Inequality~\eqref{eq:sanity}.  Using Equation~\eqref{eq:stereo1} and Inequality~\eqref{eq:stereo2} it follows that
\begin{align}
	\label{eq:blockfabrik}
\Pr{\cE_1} \le C_1  \exp( -c_1 (x / n^{1- 1 / a_d})^{\delta} ) + C_1 \sum_{ \mathclap{n^{\alpha + \epsilon} \le \ell \le \frac{n}{2} }} \Pr{N_n = \ell} \exp( -c_1 (x / \ell^{1- 1 / a_1})^{\delta_1} ). 
\end{align}
By Proposition~\ref{prop:tight445}, Equation~\eqref{eq:unasymptotics} and Equation~\eqref{eq:extref} there exist constants $A,B,C>0$ such that
\begin{align}
	 \label{eq:zerg1}
	\Pr{N_n = \ell} \le \exp(-Cn)
\end{align}
whenever $\ell \ge Bn$, and
\begin{align}
	\label{eq:zerg2}
	\Pr{N_n = \ell} \le  O(n^{-\alpha}) \exp\left(- C \ell^{\frac{1}{1-\alpha}} n^{- \frac{\alpha}{1- \alpha}}\right)
\end{align}
whenever $An^\alpha \le \ell \le Bn$. Indeed, the factor $\frac{v_\ell \rho_v^\ell}{ u_n \rho_w^n}$ multiplied with $\Pr{S_n = \ell}$ in  Equation~\eqref{eq:extref} (for $\beta= 1/a_1$) evaluates to $O(n^{1+\alpha/a_1} / \ell^{1 + 1/a_1} )$ in this setting. For Inequality~\eqref{eq:zerg1}, the exponential factor in Item 1 of Proposition~\ref{prop:tight445} ``swallows'' this rational function in $\ell$ and $n$, because we may always replace the constant $C$ by a smaller constant. For Inequality~\eqref{eq:zerg2}, we have to combine the factor with the one in Item 2 of Proposition~\ref{prop:tight445} before the exponential term there, which evaluates to
\begin{align*}
	O(n^{1+\alpha/a_1} / \ell^{1 + 1/a_1} )\sqrt{\ell^{\frac{1}{1-\alpha}} n^{-\frac{2 - \alpha}{1-\alpha} }} &= O(1) \left( n^{2\alpha(1-\alpha) - \alpha a_1} \ell^{ (2\alpha-1)a_1 - 2(1-\alpha)} \right)^{\frac{1}{2(1-\alpha)a_1}} \\&= O(n^{-\alpha})
\end{align*}
since $\ell \ge A n^{\alpha}$. Inequalities~\eqref{eq:zerg1} and~\eqref{eq:zerg2} allow us simplify Inequality~\eqref{eq:blockfabrik} to
\begin{align*}
	\Pr{\cE_1} \le \quad  & O(1)  \exp( -c_1 (x / n^{1- 1 / a_d})^{\delta} ) \\+ &O(n^{-\alpha}) \sum_{ \mathclap{n^{\alpha + \epsilon} \le \ell \le Bn }} \exp\left( - \left(C \ell^{\frac{1}{1-\alpha}} n^{- \frac{\alpha}{1- \alpha}} + c_1 (x / \ell^{1- 1 / a_1})^{\delta_1} \right)\right). 
\end{align*}
Now, if the sum index $\ell$ and the parameter $x$ are so that $ \ell^{\frac{1}{1-\alpha}} n^{- \frac{\alpha}{1- \alpha}} < (x / n^{1- 1 / a_d})^{\delta}$, then by Inequality~\eqref{eq:sanity}, Inequality~\eqref{eq:ineqas} and $\delta_1 > \delta$
\begin{align*}
\frac{n^{(1 - 1/a_d)\delta}}{\ell^{(1- 1/a_1)\delta_1}} &\ge \frac{n^{(1 - 1/a_d)\delta}}{ n^{\alpha (1- 1/a_1) \delta_1} (x / n^{1- 1 / a_d})^{\delta(1-\alpha)(1- 1/a_1) \delta_1} } \\
&\ge \frac{n^{(1 - 1/a_d)\delta}}{ n^{\alpha (1- 1/a_1) \delta_1}} \\
&\ge n^{(1 - 1/a_d)(\delta - \delta_1)} \\
&\ge x^{\delta - \delta_1}.
\end{align*}
Hence, there are constants $C_2,c_2>0$ (independent of $\ell$ and $n$) with
\begin{align}
	\label{eq:stereol}
	\Pr{\cE_1} \le C_2  \exp( -c_2 (x / n^{1- 1 / a_d})^{\delta} ).
\end{align}

We now show Inequality~\eqref{eq:mulbound} by induction on $d$. As mentioned above, the case $d = 1$ has already been treated in the literature. We already have a bound for $\Pr{\cE_1}$, so tend to the event $\cE_2$.  We let $K_{(1)} \ge \ldots K_{(N_n)} \ge 1$ denote the number of vertices in the $N_n$ supertrees attached to the first level. Thus
\begin{multline*}
	\Pr{\cE_{2}}  =  \sum_{\ell=1}^{\lfloor n/2 \rfloor} \Pr{N_n = \ell} \sum_{ \substack{ k_1 \ge \ldots \ge k_\ell \ge 1 \\ k_1 + \ldots + k_\ell = n - \ell}} \Pr{K_{(1)} = k_1, \ldots, K_{(\ell)} = k_\ell } p_{k_1, \ldots, k_\ell},
\end{multline*}
with $p_{k_1, \ldots, k_\ell}$ denoting the probability that, conditioned on the number of vertices, one of the supertrees attached to the first level has height at least $x/2$. Setting  $\|\bm{k}\|_1 = \sum_{i=1}^\ell k_i$ (which is always equal to $n-\ell$ in the sum above), and $r = (1-1/a_d)\delta$, it follows from the induction hypotheses that there exist constants $C_3,c_3>0$ with
\begin{align*}
	p_{k_1, \ldots, k_\ell} 
	&\le C_3 \sum_{i=1}^\ell \exp\left(-c_3 \left(x / k_i^{1 - 1/a_d} \right)^\delta\right) \\
	&= C_3 \sum_{i=1}^\ell \exp\left(-c_3 \left(x / \|\bm{k}\|_1^{1 - 1/a_d} \right)^\delta\right)^{1 /q_i^r },
\end{align*}
with $q_i = k_i / \|\bm{k}\|_1$ so that $q_1 + \ldots + q_\ell= 1$. We hence seek to maximize a function \[
f_\ell(x_1, \ldots, x_\ell) = \sum_{i=1}^\ell A^{1/x_i^r}
\]
for $A =  \exp\left(-c_3 \left(x / \|\bm{k}\|_1^{1 - 1/a_d} \right)^\delta\right) <1$ on the set 
\[
M_\ell = \{ (x_i)_{1 \le i \le \ell} \in \ndR_{>0}^\ell  \mid \sum_{i=1}^\ell x_i = 1\}.
\]

For now we consider the case where $x$ is large enough so that
\begin{align}
	\label{eq:bandaid2}
	c_3 x^2/\|\bm{k}\|_1 >  \frac{(1 + r)2^{r}}{r}.
\end{align}

Clearly the function $f_\ell$ is continuous on $\ndR_{\ge 0}^\ell$  and hence assumes a maximum on the closed hull $\overline{M}_\ell$. Let $\bm{y} \in \overline{M}_\ell$ be any point where this maximum is assumed. In case $\bm{y} \in M_\ell$ by Lagrange multipliers it follows that $\bm{y} = (1/\ell, \ldots, 1 / \ell)$. This is because Inequality~\eqref{eq:bandaid2} implies that
\[
	\frac{\mathrm{d}^2}{\mathrm{d}q^2} A^{1 / q^{r}} = \frac{A^{1 / q^r}r \log(A) ((1+r)q^r + r \log(A))}{q^{2(1+ r)}} > 0
\]
for all $0<q<1$.
In case $\bm{y} \notin M_\ell$,  the vector $\tilde{\bm{y}}$, obtained from $\bm{y}$ by deleting all coordinates equal to zero, lies in $M_{\tilde{\ell}}$ for some integer $1 \le \tilde{\ell} < \ell$. Since $\sup_{ \tilde{\bm{x}} \in \overline{M}_{\tilde{\ell}}} f_{\tilde{\ell}}(\tilde{\bm{x}}) \le \sup_{\bm{x} \in \overline{M}_\ell } f_{\ell}(\bm{x})$, it holds that $f_{\tilde{\ell}}$ assumes its maximum on $\overline{M}_{\tilde{\ell}}$ at the point $\tilde{\bm{y}} \in M_{\tilde{\ell}}$, and clearly $f_{\tilde{\ell}}(\tilde{\bm{y}}) = f_{\ell}(\bm{y})$. By Lagrange multipliers it then follows that $\tilde{\bm{y}} = (1/\tilde{\ell}, \ldots, 1/\tilde{\ell})$.

Thus
\begin{align}
	\label{eq:tightstable}
	p_{k_1, \ldots, k_\ell} &\le C_3  \sup_{1 \le i \le \ell} i  \exp \left(-c_3 i^{r} \left(x / \|\bm{k}\|_1^{1 - 1/a_d} \right)^\delta \right).
\end{align}
Using Inequality~\eqref{eq:bandaid2}, it follows that there exists a constants $c_4>0$  that  does not depend on $\ell$ or $k_1, \ldots, k_\ell$ so that
\begin{align*}
	p_{k_1, \ldots, k_\ell} &\le C_3\exp\left(-c_4 \left( x / \|\bm{k}\|_1^{1 - 1/a_d} \right)^\delta \right)
\end{align*}
uniformly for all $x>0$ satisfying Inequality~\eqref{eq:bandaid2}. For all $x<0$ that do not satisfy  Inequality~\eqref{eq:bandaid2} it holds that $\exp\left(-c_4 \left( x / \|\bm{k}\|_1^{1 - 1/a_d} \right)^\delta \right) > \exp\left(- \frac{c_4}{c_3} \frac{(1 + r)2^{r}}{r}\right)$. Hence, setting $C_4 = \max(C_3,  \exp( \frac{c_4}{c_3} \frac{(1 + r)2^{r}}{r}))$ it follows using $	p_{k_1, \ldots, k_\ell} \le 1$ that
\begin{align}
	\label{eq:realtight2}
	p_{k_1, \ldots, k_\ell} &\le C_4   \exp\left(-c_4 \left( x / \|\bm{k}\|_1^{1 - 1/a_d} \right)^\delta \right)
\end{align}
uniformly for all $x>0$.
Hence,
\[
\Pr{\cE_{2}}  \le C_4   \exp\left(-c_4 \left( x / n^{1 - 1/a_d} \right)^\delta \right).
\]
Combining this with Inequality~\eqref{eq:stereol} it follows that there exist constants $C_5, c_5>0$ that do not depend on $n$ or $x$ with
\[
\Pr{ \He(\mT_n) \ge x } \le \Pr{\cE_1} + \Pr{\cE_2} \le C_5\exp\left(-c_5 \left( x / n^{1 - 1/a_d} \right)^\delta \right).
\]
This completes the proof of Inequality~\eqref{eq:mulbound}.

With Inequality~\eqref{eq:realtight2}, the  analogon of the tightness condition~\eqref{eq:step1} (with $\sqrt{n}$ replaced by $n^{1-1/a_d}$) may now be verified analogously as in the proof of Theorem~\ref{te:main}. Hence, the proof of Theorem~\ref{te:mulscal} is now complete.
\end{proof}

\subsection{The dense regime}

Consider a class of supertrees so that the associated composition scheme $V(W(x))$ satisfies the conditions of~\cite[Thm. 3.1]{gibbsmcta}. Then a random $n$-sized supertree the first level has a size that is linear in $n$, and the tree components in the second level have a bounded average size, with the largest ones scaling at the order $n^{1/\alpha}$ for some $1 < \alpha < 2$, or possibly even at most at logarithmic order. Usually that means the first level dominates the global shape, but we also outline an example where the second level does.

\subsubsection{P\'olya trees}

P\'olya trees are rooted unordered unlabelled trees. Their ordinary generating series $A(z)$ satisfies the well-known equation
\[
	A(z) = z \exp\left( \sum_{i \ge 1} A(z^i)/i \right),
\]
since any P\'olya tree corresponds to a root vertex with a multiset of other P\'olya trees attached to it. Otter~\cite{MR0025715} derived the asymptotic  formula for the number $a_n$ of $n$-vertex P\'olya trees
\begin{align}
	\label{eq:polya}
	a_n \sim c_A n^{-3/2} \rho^{-n}
\end{align}
for constants $c_A \approx 0.439924$ and $\rho \approx 0.338321$, and showed that $A(\rho) = 1$.

Letting $T(z) = \sum_{n \ge 1} \frac{n^{n-1}}{n!} z^n$ denote the generating series for rooted unordered labelled trees, the equation
\[
	T(z) = z \exp(T(z))
\]
entails by functional inversion that
\[
	A(z) = T\left( z\exp\left( \sum_{i \ge 2} A(z^i)/i \right) \right). 
\]
This equation in fact corresponds to a structural result that identifies P\'olya trees as a special case of supertrees: There is a $1$ to $n!$ correspondence between $n$-vertex P\'olya trees and pairs $(t, \sigma)$ of an $n$-vertex labelled rooted tree $t$ and an automorphism $\sigma$ of~$t$. These pairs are called symmetries. The fixed points of $\sigma$ form a subtree $t^f$ of $T$. Each vertex of $t^f$ is attached to further branches that $\sigma$ permutes in a fixed-point free manner. Thus, each vertex of $t^f$ is attached to its own (possibly empty) fixed-point free symmetry. Hence the composition of the generating series $T(z)$ for the fixed point tree with the product of $z$ and the generating series $\exp\left( \sum_{i \ge 2} A(z^i)/i \right)$ for fixed-point free symmetries.

This composition scheme satisfies the conditions of~\cite[Thm. 3.1, Lem. 3.5]{gibbsmcta}, so that the size of the first level concentrates in a $\sqrt{n}$ window around $n/\Ex{X}$ for the random non-negative integer $X$ with probability generating series
\[
	\Ex{z^X} = \rho z \exp\left( 1 + \sum_{i \ge 2} A((\rho z)^i)/i \right),
\]
and the maximal size of a level $2$ tree component is at most logarithmic in $n$. By Aldous scaling limit~\cite{MR1085326,MR1207226}, a uniform $n$-sized labelled tree admits the Brownian tree $(\cT_{\mathrm{e}},d_{\cT_{\mathrm{e}}}, \mu_{\cT_{\mathrm{e}}})$ as scaling limit after multiplying distances by $1 / (2 \sqrt{n})$. Consequently, with $\mA_n$ denoting the uniform $n$-vertex P\'olya tree equipped with the uniform probability measure $\mu_{\mA_n}$on its vertex set
\[
	(\mA_n, \sqrt{\Ex{X} / (4n)} d_{\mA_n}, \mu_{\mA_n}) \convdis (\cT_{\mathrm{e}},d_{\cT_{\mathrm{e}}}, \mu_{\cT_{\mathrm{e}}})
\]
in the Gromov--Hausdorff--Prokhorov topology. This result was proven first by~\cite{MR3050512} using Markov-branching trees, and then recovered by~\cite{MR3773800} using Bienaym\'e--Galton--Watson branching processes. To be precise, the size of the first level and the maximal size of the tree-components of the second level in the present supertrees viewpoint imply Gromov--Hausdorff convergence, and the extension to the Gromov--Hausdorff--Prokhorov is by either arguing analogously to~\cite{MR3729639} where it was shown that exchangeable components cause a linear scaling of the mass measure, or alternatively by branching process methods~\cite{MR3773800} which yield convergence of the contour and height processes.

\subsubsection{Bundles of loops}
\label{sec:bundle}

We outline an example where the second level components to dominate.
For $i=1,2$  consider a random non-negative integer $\xi_i$ with $0<\Ex{\xi_i}<1$, $\Pr{\xi_i=0}>0$, and
\[
	\Pr{\xi_i = n} \sim c_1 n^{-1 -\alpha_i}
\]
for some $c_i>0$ and $1 < \alpha_i < 2$.
The looptree of a tree may be obtained by blowing up each vertex into a loop whose circumference is equal to the degree of the vertex~\cite{MR3286462}. Consider the structure $\mL$ obtained taking a $\xi_1$-Bienaym\'e--Galton--Watson tree and attaching to each of its vertices an independent copy of the looptree corresponding to a $\xi_2$-Bienaym\'e--Galton--Watson tree. We assume that
\[
	\alpha_2 > \alpha_1.
\]
and let $\mL_n$ denote the result of conditioning $\mL$ on having $n$ vertices in total.

The size of a $\xi_2$-Bienaym\'e--Galton--Watson tree has first moment
\[
	\mu = \frac{1}{1 -\Ex{\xi_2}}.
\]
The assumptions of~\cite[Thm. 3.1]{gibbsmcta} are satisfied, so that the first level of this superstructure has a size $N_n = n/\mu + O_p(n^{1/{\alpha_2}})$ (in fact the fluctuations  approach an ${\alpha_2}$-stable law). Let $\mathbb{D}([0,1], \ndR)$ denote the set of càdlàg functions on the unit interval, equipped with the Skorokhod $J_1$-topology. Let $(Y_s)_{s \ge 0}$ denote the spectrally positive L\'evy process with Laplace exponent
\begin{align}
	\Exb{e^{-t Y_s }} = \exp(s t^{\alpha_2}).
\end{align}
By~\cite[Thm. 3.3]{gibbsmcta} the component sizes $K_1, \ldots, K_{N_n}$ satisfy
\begin{align*}
		\left( \frac{\sum_{i=1}^{\lfloor s N_n \rfloor}K_i - N_n s \mu }{C n^{1/{\alpha_2}}}, \,\, 0 \le s \le 1 \right) \convdis (\mu Y_s, \,\, 0 \le s \le 1)
\end{align*}
in $\mathbb{D}([0,1], \ndR)$ as $n \to \infty$, with
\[
	C = \mu^{-1-1/\alpha_2} \left( \frac{c_2}{(1 - \Ex{\xi_2})^{1+\alpha_2}} \frac{\Gamma(1- \alpha_2)}{\alpha_2}\right)^{1/\alpha_2}.
\] Thus, the jumps  of $(\mu Y_s, \,\, 0 \le s \le 1)$ correspond to the maximal level 2 component sizes (at order $n^{1/\alpha_2}$).

By~\cite{MR3335012} (see also~\cite{MR4144886}) the looptree of a $\xi_2$-tree conditioned to have $m$ vertices looks like a linearly sized loop of diameter $m/\mu + O_p(m^{1/\alpha_2})$ with $O_p(m^{1/\alpha_2})$-sized components attached to it. The first level on the other hand, again by results of~\cite{MR3335012}, has logarithmic diameter. 

Hence, if we multiply all distances in $\mL_n$  by $1 / (C n^{1/\alpha_2})$, then the first level contracts to a single point. The mass of the level-$2$ components is concentrated on the small components by~\cite{gibbsmcta}, thus the entire mass of $\mL_n$ concentrates on this single point. For any fixed $k \ge 1$ the macroscopic loops in the $k$ largest level-$2$ components converge jointly after rescaling  by $1 / (C n^{1/\alpha_2})$  to the $k$ largest jump heights of $(Y_s, 0 \le s \le 1)$.

Thus, $\mL_n$  equipped with the uniform probability measure $\mu_{\mL_n}$ on its set of vertices satisfies
\[
	(\mL_n, \frac{1}{C n^{1/\alpha_2}} d_{\mL_n}, \mu_{\mL_n}) \convdis \mathcal{B}(\alpha)
\]
in the Gromov--Hausdorff--Prokhorov sense as $n \to \infty$,  with the limit $\mathcal{B}(\alpha)$  consisting of a single point  carrying all the mass to which we attach countably infinite many massless loops whose circumferences equal  the jump heights of  the process $(Y_s, 0 \le s \le 1)$. The tightness condition required in deducing this convergence boils down to showing that $k$th largest jump of $(Y_s, 0 \le s \le 1)$ converges in probability to zero as $k \to \infty$, and this clear from the explicit description of its density function~\cite[Cor. 3.4]{gibbsmcta}.

\subsection{The convergent regime}

Supertrees whose  associated composition scheme $V(W(x))$ falls into the convergent regime of~\cite[Thm. 3.11]{gibbsmcta} have the property, that the first level remains stochastically bounded, as do the tree components in the second level, except for a single giant level $2$ tree component. Thus the scaling limit of the supertree is identical to the scaling limit of large trees of the inner class.

\subsubsection{Planar supertrees}
	To illustrate the regime we discuss an  example of $2$-dimensional supertrees, which was mentioned in~\cite[Ex. VI.10]{MR2483235}. 
	
	Let $G$ denote the class of plane trees, such that
	\[
	G(z) = z / (1- G(z)).
	\]
	This way, $G$ has radius of convergence $1/4$, and $G(1/4) = 1/2$.
    Consider the class $K$ of supertrees given by the composition scheme
	\[
	K(z) = G(zG(z)).
	\]
	Thus, a tree from $K$ consists of a first level plane tree from $G$, where each vertex has a child that is the root of some plane tree of the second level. 	The composition is subcritical since $\frac{1}{4} G(\frac{1}{4}) = \frac{1}{8} < \frac{1}{4}$. Hence the conditions of~\cite[Thm. 3.11]{gibbsmcta} are met. This means there is a single giant level $2$ component, and hence the uniform $n$-vertex supertree $\mK_n$ from the class $K$ equipped with the uniform measure $\mu_{\mK_n}$ 
	admits the Brownian tree $(\cT_{\mathrm{e}},d_{\cT_{\mathrm{e}}}, \mu_{\cT_{\mathrm{e}}})$ as limit
\[
	(\mK_n, (2n)^{-1/2} d_{\mK_n}, \mu_{\mK_n}) \convdis 	(\cT_{\mathrm{e}},d_{\cT_{\mathrm{e}}}, \mu_{\cT_{\mathrm{e}}}).
\]

	It's interesting to note that if we make a very slight adjustment to our model by declaring each edge connecting a vertex of the first level with the vertex of the second level to be either a left or right edge, then we look at a class $\tilde{K}$ in the dilute regime with generating series
	\[
		\tilde{K}(z) = G(2zG(z)).
	\]
	The scaling limit of the random $n$-vertex tree $\tilde{\mK}_n$  from the class  $\tilde{K}$ equipped with the uniform probability measure  $\mu_{\tilde{\mK}_n}$ is then no longer the Brownian tree, but instead
		\[
	(\tilde{\mK}_n, (2n)^{-1/2} d_{\tilde{\mK}_n}, \mu_{\tilde{\mK}_n})  \convdis  \cS(1/2, -1/4, 1/2, L_{\mathrm{Brownian}}),
	\]
	with $\mu_{\tilde{\mK}_n}$ denoting the uniform probability measure on the vertex set of $\tilde{\mK}_n$.
	

\subsection{The mixture regime}

In the mixture regime, we obtain a limiting behaviour that is a mixture of the convergent and dense regime. We enter this regime when the  composition scheme $V(W(x))$ associated to the supertrees satisfies the requirements of~\cite[Thm. 3.12]{gibbsmcta}.

For example, if in the random graph $\mL_n$ of Section~\ref{sec:bundle} we would have $\alpha_1 = \alpha_2$ (instead of $\alpha_2 > \alpha_1$), then by~\cite[Thm. 3.12]{gibbsmcta} there is a limiting probability $0<p<1$ such that with asymptotic probability $p$ the random graph $\mL_n$ looks like an  $(n-O_p(1))$-sized level $2$-component with a stochastically bounded rest attached to it, so that the overall shape is that of a circle with circumference $n/\mu$. And on the complementary event, with limiting probability $1-p$ the behaviour is as in Section~\ref{sec:bundle} with a diameter of order $O_p(n^{1/\alpha_2})$. Thus, the scaling limit of $\mL_n$ after rescaling distances by $n^{-1}$ is a mixture of a circle with circumference $1/\mu$ (with probability $p$) and a single point (with probability $1-p$).

\section{Local convergence of Kemp's $d$-dimensional binary trees}

\label{sec:prooflocal}

Let $d \ge 1$. Since $F_d(1/4) = 1$, we may define a  random finite tree $\mF^d$  from the class $\cF_d$ that gets drawn with probability
\[
	\Pr{\mF^d = F} = (1/4)^{|F|}.
\]
Here $|F|$ denotes the number of vertices of the tree $F$. 

In the following, we now suppose that $d \ge 2$. Decomposing the trees from the class $\cF_d$ at the root yields
\[
	F_{d}(x) = x F_{d-1}(x)(1 + 2 F_d(x) + F_d(x)^2).
\]
From this and Equation~\eqref{eq:fdasymp} the following result is elementary (or a consequence of limits for general convergent Gibbs partitions~\cite{MR3854044}):
\begin{lemma}
	\label{le:root}
	 The $\cF_{d-1}$-tree attached to the root of $\mF_{d,n}$ converges to an independent copy of $\mF^{d-1}$.  Jointly, there are four equally likely cases: 	
	\begin{enumerate}
		\item The root vertex is linked via a left edge to a single macroscopic tree from $\cF_d$, and to nothing else.
		\item The root vertex is linked via a right edge to a single macroscopic tree from $\cF_d$, and to nothing else.
		\item The root vertex is linked via a left edge to a single macroscopic tree from $\cF_d$, and via a right edge to an independent copy of $\mF^{d}$.
		\item The root vertex is linked via a right edge to a single macroscopic tree from $\cF_d$, and via a left edge to an independent copy of $\mF^{d}$.
	\end{enumerate}
\end{lemma}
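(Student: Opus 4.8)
The plan is to read the given root decomposition as a convergent Gibbs partition and to extract the behaviour near the root by a ``one big jump'' argument. The statement is also a direct instance of the general limit theorems for convergent Gibbs partitions of~\cite{MR3854044}, whose hypotheses follow from~\eqref{eq:fdasymp} together with $F_{d}(1/4)=F_{d-1}(1/4)=1$; but since the elementary argument also identifies the four cases and shows they are equally likely, I would spell it out. Recall that $F_d(1/4)=1$ means $\sum_{a\ge1}f_{d,a}(1/4)^a=1$, so the Boltzmann law $\Pr{\mF^d=F}=(1/4)^{|F|}$ on $\cF_d$ is a genuine probability measure, and likewise for $\mF^{d-1}$ on $\cF_{d-1}$.

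First I would set up the size decomposition. Write $g_m=[x^m](1+2F_d(x)+F_d(x)^2)$, so that $g_0=1$ and, for $m\ge1$, $g_m=2f_{d,m}+\sum_{a+b=m}f_{d,a}f_{d,b}$. The root decomposition then reads $f_{d,n}=\sum_{r\ge1}f_{d-1,r}\,g_{n-1-r}$: an $n$-vertex $\cF_d$-tree is its root carrying an $\cF_{d-1}$-tree of size $r$, together with a child configuration of total size $n-1-r$ drawn from the class with series $(1+F_d)^2$. By~\eqref{eq:fdasymp} the weights $f_{d,m}(1/4)^m$ and $f_{d-1,r}(1/4)^r$ are regularly varying with indices $-1-2^{-d}$ and $-1-2^{-(d-1)}$ respectively; since $2^{-(d-1)}>2^{-d}$, the $\cF_{d-1}$-tail is strictly lighter.

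Next comes the one big jump. The subexponential convolution estimate for the regularly varying probability weights $f_{d,a}(1/4)^a$ gives $\sum_{a+b=m}f_{d,a}f_{d,b}\sim2f_{d,m}$, hence $g_m(1/4)^m\sim4f_{d,m}(1/4)^m$; the factor $4$ splits as $2+2$, namely a factor $2$ from the single-child term $2f_{d,m}$ (a left child or a right child) and a factor $2$ from the two-children term, where asymptotically one subtree is of size $m-O(1)$ and the other is bounded. Conditioning on $\{|\mF_{d,n}|=n\}$, the strictly lighter $\cF_{d-1}$-tail keeps the root's $\cF_{d-1}$-tree of stochastically bounded size, and no two components can both be macroscopic, so with probability tending to $1$ exactly one $\cF_d$-subtree has $n-O_p(1)$ vertices while everything else around the root is bounded. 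The four asymptotically equal contributions to $g_m$ are precisely cases (1)--(4); a short computation with~\eqref{eq:fdasymp} (each contribution is $\sim\tfrac14 f_{d,n}$) shows that each has limiting probability $1/4$. In cases (3) and (4) the bounded $\cF_d$-subtree on the opposite side is the small summand of the one-big-jump decomposition, whose conditional law is proportional to $f_{d,b}(1/4)^b$, so it converges to $\mF^d$; in all four cases the conditional law of the root's $\cF_{d-1}$-tree is proportional to $f_{d-1,r}(1/4)^r$ regardless of the case, so it converges to $\mF^{d-1}$, jointly and asymptotically independently of the case and of the bounded $\cF_d$-subtree, because the underlying counting weights factorise. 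Finally, conditionally on the component sizes the macroscopic $\cF_d$-subtree is a uniform element of $\cF_{d,\,n-O_p(1)}$, since the uniform law on $\cF_{d,n}$ pushes forward to products of uniform laws along the decomposition; this is the form in which the lemma feeds into the induction of Theorem~\ref{te:main2}.

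The only genuine obstacle is to make this one big jump rigorous in the joint, asymptotically independent form above, namely simultaneous convergence of the root's $\cF_{d-1}$-tree, of the bounded $\cF_d$-subtree in cases (3)--(4), and of the discrete four-way choice. I expect this to be routine: for a fixed bounded configuration of the non-macroscopic pieces, write its conditional probability as a ratio $f_{d,\,n-O(1)}/f_{d,n}$ times bounded factors and pass to the limit using~\eqref{eq:fdasymp}, controlling the (summable) sum over bounded configurations by dominated convergence --- or one may simply invoke the convergent-Gibbs-partition limit theorems of~\cite{MR3854044} for the statement as given.
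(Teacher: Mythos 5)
Your proposal is correct and follows exactly the route the paper indicates: the paper gives no detailed argument, merely noting that the lemma is elementary from the root decomposition $F_{d}(x) = x F_{d-1}(x)(1 + 2 F_d(x) + F_d(x)^2)$ together with Equation~\eqref{eq:fdasymp}, or follows from the convergent Gibbs partition limits of~\cite{MR3854044} --- both of which you invoke. Your elementary computation (the convolution estimate $g_m(1/4)^m\sim 4f_{d,m}(1/4)^m$, the lighter $\cF_{d-1}$-tail, the ratio $g_{n-1-r}/f_{d,n}\to(1/4)^r$, and summability giving total-variation convergence) is a correct and complete instantiation of what the paper leaves to the reader.
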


Lemma~\ref{le:root} may be applied again to the unique macroscopic $\cF_d$ component, which then has a unique macroscopic $\cF_d$-component by itself, to which we again may apply Lemma~\ref{le:root}, and so on. Hence
\[
	\mF_{d,n} \convdis \hat{\mF}_d
\]
in the local topology, with  $\hat{\mF}_d$ denote denoting the tree constructed from an infinite path $v_0, v_1, v_2, \ldots$ such that for each $i \ge 0$ the vertex $v_i$ is linked via an edge to an independent copy of $\mF^{d-1}$, and according to a fair independent choice one of the following four cases hold:
\begin{enumerate}
	\item The edge joining $v_i$ and $v_{i+1}$ is declared a left edge.
	\item The edge joining $v_i$ and $v_{i+1}$ is declared a right edge.
	\item The edge joining $v_i$ and $v_{i+1}$ is declared a left edge, and additionally we add a right edge from $v_i$ to the root of an independent copy of $\mF^d$.
	\item The edge joining $v_i$ and $v_{i+1}$ is declared a right edge, and additionally we add a left edge from $v_i$ to the root of an independent copy of $\mF^d$.
\end{enumerate}
This concludes the proof of the first limit in Theorem~\ref{te:main2}. As for the local limit near a random root, Equation~\eqref{eq:kin} and induction on $d$ entail that the extremal sizes $K^d_{(1)} \ge K^d_{(2)} \ge \ldots$ of the level $d$ trees in $\mF_{d,n}$ satisfy
\begin{align}
	\label{eq:witness}
	(K_{(i)}^d/n)_{i \ge 1} \convdis (Y_i)_{i \ge 1}
\end{align}
with $(Y_i)_{i \ge 1}$ denoting the ranked points of the two-parameter Poisson--Dirichlet distribution $\mathrm{PD}(1/2, -1/2^d)$.

We know from~\cite{MR1102319,MR2908619} that there exists a random tree $\bar{\mF}$ such that
\[
(\mF_{1,n},v_n) \convdis \bar{\mF}
\]
in the local topology for re-rooted trees. If we take two independent uniformly selected vertices $v_{n}^{(1)}$ and $v_{n}^{(2)}$ of $\mF_{1,n}$ then their neighbourhoods converge jointly to the neighbourhoods of two independent copies of $\bar{\mF}$. This is equivalent to the quenched convergence
\[
\mathfrak{L}((\mF_{1,n},v_n) \mid \mF_{1,n}) \convp \mathfrak{L}(\bar{\mF}).
\]

Since $\sum_{i \ge 1} Y_i = 1$, it follows from Equation~\eqref{eq:witness} that a uniformly selected vertex of $\mF_{d,n}$ falls with high probability into one of the level $d$ tree components. Since neighbourhoods of the root of large trees in $\cF_1$ have a stochastically bounded size (see the local convergence in~\cite{MR2908619}), it follows that a random vertex in a large tree from $\cF_1$ has a height that converges in probability towards infinity. Thus, (fixed radius) neighbourhoods of the random vertex $v_n$ of $\mF_{d,n}$ are with high probability entirely contained in the level $d$ tree component that $v_n$ falls into. Thus
\[
	(\mF_{d,n},v_n) \convdis \bar{\mF}.
\]
If we take two independent uniform vertices 
of $\mF_{d,n}$, then there asymptotically two cases: Either they fall into different level $d$ components, in which their neighbourhoods converge in distribution towards neighbourhoods of independent copies of $\bar{\mF}$. Or they fall into the same level $d$ component, and  by the quenched convergence of $\mF_{1,n}$ their neighbourhoods then also converge towards neighbourhoods of two independent copies of $\bar{\mF}$. This proves quenched convergence
\[
\mathfrak{L}((\mF_{d,n},v_n) \mid \mF_{d,n}) \convp \mathfrak{L}(\bar{\mF}).
\]
Hence the proof of Theorem~\ref{te:main2} is complete.

\section*{Acknowledgement}

I warmly thank the referee for the thorough reading and helpful remarks. I am grateful to Markus Heydenreich and Andrei Shubin for helpful discussions.

\section*{Declarations}

\emph{Conflict of interest statement:} We declare that the author has no competing interests as defined by Springer, or other interests that might be perceived to influence the results and/or discussion reported in this paper.

\emph{Data availability statement:} We do not analyse or generate any datasets, because our work proceeds within a theoretical and mathematical approach.

\bibliographystyle{abbrv}
\bibliography{supertrees}

\end{document}